\newcommand{\bbN}{{\mathbb N}}
\newcommand{\bbZ}{{\mathbb Z}}
\newcommand{\bbC}{{\mathbb C}}
\newcommand{\bbF}{{\mathbb F}}
\newcommand{\calW}{\mathcal{W}}
\newcommand{\calU}{\mathcal{U}}
\newcommand{\calV}{\mathcal{V}}
\newcommand{\calO}{\mathcal{O}}
\newcommand{\expected}{\operatorname{\mathbb{E}}}
\newcommand{\bs}{\backslash}
\newcommand{\id}{\operatorname{id}}
\newcommand{\im}{\operatorname{im}}
\newcommand{\pr}{\operatorname{pr}}
\newcommand{\vol}{\operatorname{vol}}
\newcommand{\ricci}{\operatorname{Ricci}}
\newcommand{\const}{\operatorname{const}}
\newcommand{\tors}{\operatorname{tors}}
\newcommand{\coker}{\operatorname{coker}}
\newcommand{\rank}{\operatorname{rk}}
\newcommand{\spt}{\operatorname{spt}}
\newcommand{\ind}{\operatorname{ind}}
\newcommand{\proj}{\operatorname{proj}}
\newcommand{\mult}{\operatorname{mult}}
\newcommand{\abs}[1]{{\left\lvert #1\right\rvert}}
\newcommand{\norm}[1]{{\bigl\lVert #1\bigr\rVert}}
\newtheorem{theorem}{Theorem}[section]
\newtheorem{lemma}[theorem]{Lemma}
\newtheorem{corollary}[theorem]{Corollary}
\theoremstyle{definition}
\newtheorem{defn}[theorem]{Definition}
\newtheorem{remark}[theorem]{Remark}
\numberwithin{equation}{section}
\begin{document}
\title{Volume and homology growth of aspherical manifolds}
\author{Roman Sauer}
\address{Karlsruhe Institute of Technology, Karlsruhe, Germany}
\email{roman.sauer@kit.edu}
\thanks{The author gratefully acknowledges support by the DFG grant 1661/3-1. I thank Jonathan Pfaff very much 
for pointing out a mistake in an earlier version where the result were stated incorrectly for 
arbitrary coefficient systems. I also thank the referee, who spotted the same mistake, for an extremely helpful report.}
\subjclass[2010]{Primary 53C23; Secondary 20F69, 57N65}
\keywords{Homology growth, aspherical manifolds, residually finite groups}

\begin{abstract}
1) We provide upper bounds on the size of the homology 
of a closed aspherical Riemannian manifold that only depend on the systole and the volume of balls. 2) We show that 
linear growth of mod~$p$ Betti numbers or exponential 
growth of torsion homology imply that a closed aspherical 
manifold is ``large''.  
\end{abstract}

\maketitle


\section{Introduction and statement of results}

\subsection{Introduction} Let $M$ be a manifold whose fundamental group $\Gamma=\pi_1(M)$ 
is \emph{residually finite}. That is, $\Gamma$ possesses a decreasing sequence -- called 
a \emph{residual chain} -- of
normal subgroups $\Gamma_i<\Gamma$ of finite index whose intersection
is trivial. By covering theory there is an associated sequence of finite regular 
coverings $\ldots\to M_2\to M_1\to M$ of $M$ such that $\pi_1(M_i)\cong\Gamma_i$ and 
$\deg(M_i\to M)=[\Gamma:\Gamma_i]$, which we call 
a \emph{residual tower of finite covers}. A basic question is: 
\begin{center}
How does the size of the homology of $M_i$ grow as $i\to\infty$?
\end{center}
The growth behaviour of the first homology was connected to largeness of groups by 
Lackenby~\cite{lackenby-large,lackenby-tau}. It is also related to the cost~\cite{abert+nikolov}. 
Number theoretic connections of homology growth in the context of arithmetic locally symmetric spaces 
are discussed in~\cite{bergeron+venkatesh}. 

What do we mean by size? 
If we measure size by Betti numbers $b_k(M_i)=\rank_\bbZ H_k(M_i;\bbZ)$, 
there is a general answer: the limit of $b_k(M_i)/[\Gamma:\Gamma_i]$ is the $k$-th $\ell^2$-Betti number of $M$ by a result of L\"uck~\cite{lueck}.  
If we measure size by mod~$p$ Betti numbers or in terms of the cardinality 
of the torsion subgroups $\tors H_k(M_i;\bbZ)\subset H_k(M_i;\bbZ)$, 
no general answer is available. 

We shall consider throughout this paper the case that $M$ is a closed \emph{aspherical} manifold, that is, 
its universal cover~$\widetilde M$ is contractible. The manifold $M$ is aspherical 
if and only if $M$ is a model for the classifying space of its fundamental group. 

Our aim is to establish upper bounds for the homology and the homology growth 
of aspherical manifolds. Our gap theorem (Theorem~\ref{thm: gap theorem}) 
shows that the volume of a 
closed aspherical Riemannian manifold whose growth of torsion homology 
is exponential and whose Ricci curvature is bounded from below by $-1$ is greater 
than a positive universal constant. This establishes a link between the homology growth of aspherical manifolds and 
largeness of Riemannian manifolds in the sense of Gromov.  
Rather than being a precise notion, 
\emph{largeness} stands here 
for a variety of phenomena in Riemannian geometry~\cite{gromov-large}. For instance, 
we would regard an aspherical manifold as large if it has non-zero minimal volume. 

Important examples 
of aspherical manifolds are locally symmetric spaces of non-compact type. 
Bergeron and Venkatesh~\cite{bergeron+venkatesh} develop a detailed, yet largely conjectural, picture for the (torsion) homology growth of arithmetic locally symmetric spaces of non-compact type. 
By relating the growth of the torsion homology of $M_i$ to the 
analytic $\ell^2$-torsion, they are able to compute precisely the growth of torsion homology 
for special coefficient systems. 

\subsection{Statement of results}

The \emph{systole} of a Riemannian manifold is the minimal length of a non-contractible 
loop. The first result, which is based on the remarkable work of Guth~\cite{guth}, 
is for the homology of one manifold at a time but has an immediate consequence for the homology growth 
(Corollary~\ref{cor: residual chain}). 

In the sequel upper bounds for the $\bbF_p$-Betti numbers are 
formulated where $p$ stands for an arbitrary prime. By the universal coefficient 
theorem the Betti number in degree~$k$ is bounded above by the $\bbF_p$-Betti number 
in degree~$k$ for any $p$. So each theorem below also yields a bound on the Betti numbers. 
The theorems of this section can be easily extended from constant to unitary coefficients but we refrain from doing so to keep the exposition short and easier to read.

\begin{theorem}\label{thm: main thm volume}
	For every $n\in\bbN$ and $V_0>0$ 
	there exists a constant $\const(n,V_0)>0$ with 
	the following property: 
	Let $M$ be an $n$-dimensional  
	closed aspherical Riemannian manifold such that 
	every $1$-ball of $M$ has volume at most $V_0$ 
	and the systole of $M$ is at least~$1$. Then for every 
	$k\in\bbN$ 
	\begin{align*}
		\dim_{\bbF_p} H_k(M;\bbF_p) &< C(n,V_0)\vol(M)\text{ and }\\
		\log\bigl(|\tors H_k(M;\bbZ)|\bigr)&< C(n,V_0)\vol(M).
	\end{align*}
\end{theorem}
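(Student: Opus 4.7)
The plan is to realize $M$ up to homotopy by a simplicial complex whose total number of simplices is linearly bounded in $\vol(M)$, and to deduce both inequalities from straightforward chain-level estimates. The geometric input will be a cover of $M$ by small metric balls with bounded multiplicity whose cardinality grows linearly in $\vol(M)$; the topological input is that, under the asphericity-plus-systole hypothesis, the nerve of such a cover is homotopy equivalent to $M$.

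Concretely, I would first invoke Guth's covering result from~\cite{guth} to produce a cover $\mathcal{U} = \{B(x_i, r)\}_{i=1}^N$ with radius $r < 1/4$, multiplicity at most $m = m(n,V_0)$, and cardinality $N \leq C_1(n,V_0)\vol(M)$. Because $r < \tfrac{1}{2}\sys(M)$, each ball and each nonempty finite intersection lifts into $\widetilde M$; combined with the contractibility of $\widetilde M$ (i.e.\ the asphericity of $M$), one can arrange that all such intersections are contractible and then apply the nerve theorem to conclude $\lvert\mathcal{K}\rvert \simeq M$, where $\mathcal{K}$ is the nerve of $\mathcal{U}$. The multiplicity bound then forces $\#\{k\text{-simplices of }\mathcal{K}\} \leq \binom{m}{k+1} N \leq C_2(n,V_0)\vol(M)$.

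Both homological bounds now follow from the chain complex of $\mathcal{K}$. For $\bbF_p$-coefficients, $H_k(M;\bbF_p) \cong H_k(\mathcal{K};\bbF_p)$ is a subquotient of simplicial $k$-chains, so its dimension is at most the number of $k$-simplices. For the torsion bound, each column of the integral boundary map $\partial_k \colon \bbZ^{N_k} \to \bbZ^{N_{k-1}}$ has at most $k+1 \leq n+1$ entries, all lying in $\{-1,0,+1\}$; Hadamard's inequality therefore bounds every minor of $\partial_k$ by $(n+1)^{N_k/2}$. Since $\lvert\tors H_{k-1}(\mathcal{K};\bbZ)\rvert$ divides a maximal minor of $\partial_k$, taking logarithms yields $\log\lvert\tors H_k(\mathcal{K};\bbZ)\rvert \leq \tfrac{1}{2} N_{k+1} \log(n+1) \leq C(n,V_0)\vol(M)$.

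The principal obstacle is the linear bound on the cover size $N$: without an \emph{a priori} lower bound on the volume of small balls, a direct packing argument fails, so Guth's machinery is the essential geometric ingredient, and this is precisely where the combination of systole~$\geq 1$ and asphericity must enter. A secondary subtlety is the verification that the intersections of the small balls are contractible, which underpins the use of the nerve theorem in the second step; this is where the lift to $\widetilde M$ and a convexity/retraction argument on the universal cover are needed.
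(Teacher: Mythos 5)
Your argument breaks down at its central step, the claim that one can ``arrange that all such intersections are contractible and then apply the nerve theorem to conclude $\lvert\mathcal{K}\rvert\simeq M$''. Under the hypotheses there is no curvature or convexity-radius control, only a systole bound and an upper bound on volumes of $1$-balls; a metric ball of radius $<1/4$ need not be contractible, let alone the pairwise and higher intersections, and contractibility of $\widetilde M$ says nothing about contractibility of small subsets. What the systole bound does give (subdivide a loop in $B(x,r)$ and cone the pieces to the center; each resulting triangle loop has length $<4r<\sys(M)$) is only that the inclusion $B(x,r)\hookrightarrow M$ is $\pi_1$-trivial. Combined with asphericity this produces a one-sided inverse: a nerve-type map $f\colon M\to$ nerve together with a map $g$ back satisfying $g\circ f\simeq\id_M$ (this is exactly Guth's Lemma~7, and the same mechanism reappears in Section~\ref{sec:amenable_covers}), not a homotopy equivalence. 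Your cardinality claim $N\le C_1(n,V_0)\vol(M)$ is also not available: there is no lower bound on the volume of small balls, so a bounded-multiplicity cover may have far more balls than $\vol(M)$ permits, and Guth does not bound the number of cells of his rectangular nerve at all; he bounds the $n$-volume of the image $f(M)$ and then, via a Federer--Fleming deformation, only the number of top-dimensional cells needed to carry $f_*[M]$.

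For these reasons the paper takes a different route, and the extra work is not optional. All that can be extracted from Guth is Theorem~\ref{thm: variation on guth}: the fundamental class of $M$ (after the reduction to the orientable case in Subsection~\ref{sub: reduction}, which your outline also omits, since the argument uses $[M]$) is represented by an integral cycle $\tau$ with at most $C(n,V_0)\vol(M)$ singular $n$-simplices, with no control on any ambient complex. Converting this single piece of information into bounds on $H_k$ in all degrees is the content of Section~\ref{sec:fundamental class}: one passes to the subcomplex $C_\ast^\tau(M)$ spanned by the faces of the simplices of $\tau$, shows via the cap product with $\tau$ and Poincar\'e duality that $H^\ast(M;\bbZ)$ injects into the cohomology of this small complex (Lemma~\ref{lem: poincare}), and only then applies the Gabber--Soul\'e determinant estimate (Lemma~\ref{lem: soule}). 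Your Hadamard-type bound is in the same spirit as that last step, but you apply it to a nerve whose size is uncontrolled and whose homotopy type you have not related to $M$; applied instead to $C_\ast^\tau(M)$, together with the duality argument, it becomes the paper's proof.
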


\begin{remark}
Gromov~\cite{ballmann+gromov+schroeder} proved 
that all Betti numbers of a real-analytic, closed or finite volume Riemannian manifold $M$ whose sectional 
curvature is between $-1$ and $0$ are bounded by $C(n)\vol(M)$. In contrast, 
our asssumptions are curvature-free but require 
a condition on the systole. For estimates of the torsion homology for non-compact arithmetic 
locally symmetric manifolds we refer to~\cites{gelander, emery}. 
\end{remark}

\begin{corollary}\label{cor: residual chain}
		For every $n\in\bbN$ and $V_0>0$ there is $C(n,V_0)>0$ with the following property: 
		Let $M$ be an $n$-dimensional  
		closed connected aspherical Riemannian manifold such that 
		every $1$-ball of the universal cover $\widetilde M$ has volume at most $V_0$. 
		Assume that the fundamental group is residually finite, and let $(M_i)$ be a 
		residual tower of finite covers. Then for every~$k\in\bbN$
		\begin{align*}
				\limsup_{i\to\infty}\frac{\dim_{\bbF_p} 
				  H_k(M_i;\bbF_p)}{\deg(M_i\to M)}&< C(n,V_0)\vol(M)\text{ and }\\
			\limsup_{i\to\infty}\frac{\log\bigl(|\tors
			  H_k(M_i;\bbZ)|\bigr)}{\deg(M_i\to M)}&< C(n,V_0)\vol(M).
	    \end{align*}
\end{corollary}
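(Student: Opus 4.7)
The plan is to reduce Corollary~\ref{cor: residual chain} to Theorem~\ref{thm: main thm volume} applied to each cover $M_i$ individually, then divide by $\deg(M_i\to M)$ and take $\limsup$ as $i\to\infty$. Equip each $M_i$ with the pulled-back Riemannian metric from $M$, so that the covering map $M_i\to M$ is a local isometry and the common universal cover of all $M_i$ and of $M$ is $\widetilde M$. With this in place, to invoke Theorem~\ref{thm: main thm volume} on $M_i$ I must verify three hypotheses: $M_i$ is aspherical, its $1$-balls have volume at most $V_0$, and $\sys(M_i)\ge 1$.

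The first is free, since any cover of an aspherical manifold is aspherical. For the ball-volume condition, I would pick $x\in M_i$ and any lift $\tilde x\in\widetilde M$. The covering projection $\widetilde M\to M_i$ is a local isometry, and its restriction to $B_{\widetilde M}(\tilde x,1)$ has image containing $B_{M_i}(x,1)$; since push-forward along a local isometry cannot increase volume, $\vol(B_{M_i}(x,1))\le \vol(B_{\widetilde M}(\tilde x,1))\le V_0$.

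The subtle step is the systole hypothesis, and this is where residual finiteness enters. Observe that $\sys(M_i)$ equals the infimum, over $\gamma\in\Gamma_i\setminus\{e\}$, of the displacement $\inf_{\tilde y\in\widetilde M}d_{\widetilde M}(\tilde y,\gamma\tilde y)$. Fixing a compact fundamental domain for the $\Gamma$-action on $\widetilde M$, an element $\gamma\in\Gamma$ with minimal displacement at most~$1$ moves some point of that domain by at most~$1$, and by cocompactness there are only finitely many such $\gamma\in\Gamma$. Because $\bigcap_i\Gamma_i=\{e\}$, these finitely many elements all lie outside $\Gamma_i$ for $i$ sufficiently large, so $\sys(M_i)\ge 1$ eventually.

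For such $i$, Theorem~\ref{thm: main thm volume} yields $\dim_{\bbF_p}H_k(M_i;\bbF_p)<C(n,V_0)\vol(M_i)$ and $\log\abs{\tors H_k(M_i;\bbZ)}<C(n,V_0)\vol(M_i)$. Using the multiplicativity of volume under finite coverings, $\vol(M_i)=\deg(M_i\to M)\vol(M)$, and dividing by the degree before passing to $\limsup$ concludes the argument. The main obstacle in this plan is the systole verification; once that is in hand, the rest is essentially formal manipulation of the single-manifold inequality in Theorem~\ref{thm: main thm volume}.
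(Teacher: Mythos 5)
Your proposal is correct and takes essentially the same route as the paper, which simply declares that $\sys(M_i)\to\infty$ and that $1$-ball volumes in $M_i$ are controlled by those in $\widetilde M$, and then applies Theorem~\ref{thm: main thm volume} together with $\vol(M_i)=\deg(M_i\to M)\vol(M)$. One small repair in your systole step: an element $\gamma$ of small minimal displacement need not itself move a point of the chosen fundamental domain by at most~$1$ --- rather some conjugate of $\gamma$ does; since the $\Gamma_i$ are normal, that conjugate still lies in $\Gamma_i$, so the finiteness argument goes through unchanged.
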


\begin{proof}[Proof of corollary]
 It is clear that the systole of $M_i$ converges to $\infty$ as $i\to \infty$ in a residual 
tower of finite covers. Note also that the maximal volume of a $1$-ball of $M$ coincides 
with the maximal volume of a $1$-ball in the universal cover~$\widetilde M$ provided the systole 
is at least~$1$. Now apply Theorem~\ref{thm: main thm volume}. 
\end{proof}

\begin{remark}
There is a natural tension between the volume on $M$ and on $\widetilde M$ 
in the above statements: If one scales the metric of $M$ by a
factor $<1$, then the volume of $M$ decreases, but the curvature and
so the volume of balls in the universal covering increase. 

If the
Ricci curvature is $\ge -1$ (short for $\ge -g$ as quadratic
forms), the volume of $1$-balls in $M$ and $\widetilde M$ is bounded
from above by a positive constant only depending on the dimension
according to the Bishop-Gromov inequality. 
\end{remark}

Our next result exhibits a gap phenomenon for the homology growth 
under a lower Ricci
curvature bound. 

\begin{theorem}\label{thm: gap theorem}
For every $n\in\bbN$ there is a constant $\epsilon(n)>0$ with the following
property: 
Let $M$ be a closed connected aspherical $n$-dimensional Riemannian manifold~$M$ 
such that $\ricci(M)\ge -1$ and the volume of every $1$-ball in~$M$ is at most $\epsilon(n)$. 
		Assume that the fundamental group is residually finite, and let $(M_i)$ be a 
		residual tower of finite covers. Then for every~$k\in\bbN$ 
\begin{align*}
\lim_{i\to\infty}~\frac{\dim_{\bbF_p} H_k(M_i;\bbF_p)}{\deg(M_i\to M)}&=0\text{ and }\\
     \lim_{i\to\infty} \frac{\log\bigl(|\tors
    H_k(M_i;\bbZ)|\bigr)}{\deg(M_i\to M)}&=0.
 \end{align*}
\end{theorem}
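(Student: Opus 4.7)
The natural first attempt---applying Theorem~\ref{thm: main thm volume} (or Corollary~\ref{cor: residual chain}) directly to $M_i$, possibly after a global rescaling---yields only a bound of the form $C(n,V_0)\vol(M)$ on the normalized homology, a quantity independent of~$i$, and therefore cannot force the limits to be zero. The proof must therefore extract more structural information from the smallness of~$\epsilon(n)$, and the natural tool is collapsing theory under a lower Ricci bound.

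My proposal is to choose $\epsilon(n)$ so that the Kapovitch--Wilking structure theorem applies: if $\ricci(M)\ge -(n-1)$ and every $1$-ball of~$M$ has volume at most $\epsilon(n)$, then $\pi_1(M)$ contains a nilpotent subgroup~$N$ of index at most $C(n)$. Since $M$ is aspherical, $\pi_1(M)$ is torsion-free, so $N$ is torsion-free and finitely generated nilpotent. Let $M'\to M$ be the finite cover corresponding to~$N$; it is closed and aspherical with $\pi_1(M')=N$, hence homotopy equivalent to the nilmanifold $G/N$ obtained from the Malcev completion~$G$ of~$N$. Intersecting the residual chain with~$N$ yields a residual tower $(M'_i)$ of~$M'$, and since $[\pi_1(M):N]\le C(n)$, the degrees $\deg(M'_i\to M')$ and $\deg(M_i\to M)$ agree up to this bounded factor. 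Standard transfer arguments then reduce the vanishing of the two limits for $(M_i)$ to the same vanishing for the tower $(M'_i)$ of the nilmanifold~$M'$.

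Finally, I would invoke known vanishing results for residual towers of nilmanifolds: Nomizu's theorem expresses $\dim_\bbQ H_k(M'_i;\bbQ)$ as the dimension of a Lie algebra cohomology, which is independent of~$i$, so the $\bbQ$-Betti growth is zero. An extension to mod-$p$ Betti numbers is available via the approximation theorem of Bergeron--Linnell--L\"uck--Sauer for residually finite amenable groups (and nilpotent groups are amenable), and integer torsion homology of nilmanifold covers grows at most polynomially in the degree, so the logarithmic torsion growth vanishes as well. The principal obstacle is the precise invocation of Kapovitch--Wilking under the given normalization of the Ricci bound, together with verifying that the cited mod-$p$ and torsion vanishing statements apply without modification in the present residually finite setting; a related subtle point is the possible mod-$p$ distortion of Betti numbers across a finite cover whose degree is divisible by~$p$, which would need to be controlled by a suitable transfer or Serre spectral sequence argument.
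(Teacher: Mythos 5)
There is a genuine gap at the very first step, and it is fatal to the whole strategy. The Kapovitch--Wilking theorem (and the generalized Margulis lemma behind it) yields a nilpotent subgroup of uniformly bounded index in $\pi_1(M)$ only under a \emph{small diameter} hypothesis, $\ricci\ge -(n-1)$ together with $\diam(M)\le\epsilon(n)$, or else for the local image $\im\bigl(\pi_1(B_{\epsilon(n)}(p))\to\pi_1(B_1(p))\bigr)$. The hypothesis of Theorem~\ref{thm: gap theorem} is much weaker: only the volume of every $1$-ball is small, while the diameter and total volume of $M$ may be arbitrarily large, and under this hypothesis $\pi_1(M)$ need not be virtually nilpotent. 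Concretely, let $\Sigma$ be a closed hyperbolic surface and $M=\Sigma\times S^1_\delta$ with the product metric, where the circle has length $\delta$. Then $\ricci(M)\ge -1$, every $1$-ball has volume at most a constant times $\delta$, hence at most $\epsilon(n)$ once $\delta$ is small, and $M$ is closed, aspherical, with residually finite fundamental group $\pi_1(\Sigma)\times\bbZ$; this group contains nonabelian free subgroups and is not virtually nilpotent (indeed not amenable), so $M$ admits no finite cover homotopy equivalent to a nilmanifold. Your reduction therefore never gets off the ground, even though the conclusion of the theorem does hold for these examples. The subsequent steps (Malcev completion, Nomizu, mod-$p$ approximation for amenable groups, polynomial torsion of nilmanifold towers) are reasonable in themselves, but they are applied to a structure that the hypotheses do not provide.

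What volume collapse with a lower Ricci bound does give is only local: by the result of Gromov cited in the paper, such an $M$ can be covered by open \emph{amenable} subsets with multiplicity at most $n$ (in the example above these come from the short circle directions). This is exactly how the paper proceeds: Theorem~\ref{thm: gap theorem} is deduced as an immediate corollary of the purely topological Theorem~\ref{thm: amenable covers}, and the real work is the proof of that theorem in Section~\ref{sec:amenable_covers} (equivariant measurable covers of $X\times\widetilde M$ via the Ornstein--Weiss Rokhlin lemma, passage to cylindrical sets and to nerves over $\Gamma/\Gamma_q$, and then the bounds on torsion and $\bbF_p$-homology in terms of the integral complexity of the fundamental class from Section~\ref{sec:fundamental class}). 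Any correct proof must handle amenable, not just nilpotent, local fundamental groups, so you would need to replace the Kapovitch--Wilking step by an argument of this amenable-cover type.
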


Gromov
showed~\cite{gromov}*{Section~3.4} that for every dimension $n$ 
there is a constant $\epsilon(n)>0$ with the following property: 
Every closed $n$-dimensional Riemannian manifold~$M$ 
such that $\ricci(M)\ge -1$ and the volume of every $1$-ball in~$M$ is 
at most~$\epsilon(n)$ can be covered by open, amenable sets with multiplicity $\le n$. 
Here a subset $U$ of a topological space $X$ is called
\emph{amenable} if the image of the map $\pi_1(U;x)\to \pi_1(X;x)$ on 
fundamental groups is amenable for any base point $x\in U$. 
Hence Theorem~\ref{thm: gap theorem} is a direct
consequence of the following topological result.

\begin{theorem}\label{thm: amenable covers}
Let $M$ be a closed connected 
aspherical $n$-dimensional manifold $M$. Assume that $M$ is covered by open, 
	amenable sets such that every point is contained in no more than
        $n$ such subsets. 
		Assume that the fundamental group is residually finite, and let $(M_i)$ be a 
		residual tower of finite covers. Then for every~$k\in\bbN$
\begin{align*}
\lim_{i\to\infty}~\frac{\dim_{\bbF_p} H_k(M_i;\bbF_p)}{\deg(M_i\to M)}&=0\text{ and }\\
     \lim_{i\to\infty} \frac{\log\bigl(|\tors
    H_k(M_i;\bbZ)|\bigr)}{\deg(M_i\to M)}&=0.
 \end{align*}
\end{theorem}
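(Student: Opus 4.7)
My plan is to extend the Gromov--Sauer approach for $\ell^2$-Betti numbers to $\bbF_p$-Betti numbers and integer torsion, combining a \v{C}ech spectral sequence (whose columns are cut off at $n-1$ by the multiplicity hypothesis) with amenable-group approximation; asphericity of $M$ enters in a crucial way through the factorization of $U_\sigma\hookrightarrow M$ through $BA_\sigma$.

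\emph{Spectral sequence reduction.} By compactness I may assume the amenable cover $\mathcal{U}=\{U_\alpha\}_{\alpha\in A}$ is finite. For the covering $p_i\colon M_i\to M$, the pullback cover $\mathcal{U}_i=\{p_i^{-1}(U_\alpha)\}$ of $M_i$ still has multiplicity $\le n$. Writing $U_\sigma=\bigcap_{\alpha\in\sigma}U_\alpha$ and $U_{\sigma,i}=p_i^{-1}(U_\sigma)$ for nonempty $\sigma\subseteq A$, the \v{C}ech-to-singular spectral sequence
\begin{equation*}
E^1_{s,q}(i)=\bigoplus_{|\sigma|=s+1}H_q(U_{\sigma,i};\bbF_p)\ \Longrightarrow\ H_{s+q}(M_i;\bbF_p)
\end{equation*}
vanishes for $s\ge n$, giving
\begin{equation*}
\dim_{\bbF_p}H_k(M_i;\bbF_p)\ \le\ \sum_{\substack{s+q=k\\ 0\le s\le n-1}}\sum_{|\sigma|=s+1}\dim_{\bbF_p}H_q(U_{\sigma,i};\bbF_p),
\end{equation*}
and the analogous bound for $\log|\tors H_k(M_i;\bbZ)|$ follows from $|\tors B|\le|\tors A|\cdot|\tors C|$ on short exact sequences of finitely generated abelian groups.

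\emph{Amenable approximation on intersections.} Let $A_\sigma=\im(\pi_1(U_\sigma)\to\Gamma)$, amenable by hypothesis. The connected components of $U_{\sigma,i}$ are indexed by $\Gamma_i\backslash\Gamma/A_\sigma$, each a regular cover of $U_\sigma$ with deck group $A_\sigma/(A_\sigma\cap\Gamma_i)$, so a counting argument gives
\begin{equation*}
\frac{\dim_{\bbF_p}H_q(U_{\sigma,i};\bbF_p)}{[\Gamma:\Gamma_i]}=\frac{\dim_{\bbF_p}H_q(\overline U_{\sigma,i};\bbF_p)}{[A_\sigma:A_\sigma\cap\Gamma_i]}
\end{equation*}
for one component $\overline U_{\sigma,i}$, and similarly for torsion. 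Since $M\simeq B\Gamma$, the inclusion $U_\sigma\hookrightarrow M$ is homotopic to the composition $U_\sigma\to BA_\sigma\to B\Gamma$, which identifies $\overline U_{\sigma,i}$ with the pullback of the $A_\sigma/(A_\sigma\cap\Gamma_i)$-cover of $BA_\sigma$ along $U_\sigma\to BA_\sigma$. Because $(A_\sigma\cap\Gamma_i)_i$ is a residual chain in the amenable group $A_\sigma$, I would then invoke L\"uck approximation for amenable groups (Elek in the $\bbF_p$ case; Abert--Nikolov for sublinear torsion growth) together with the Cheeger--Gromov vanishing $b_q^{(2)}(A_\sigma)=0$ for infinite amenable $A_\sigma$ to conclude that both right-hand sides tend to zero as $i\to\infty$. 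Summing the finitely many nonzero terms in the spectral-sequence inequality then yields the theorem.

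\emph{Main obstacle.} The hard part is the vanishing assertion above. Amenability of $A_\sigma$ alone is not enough --- the $\bbZ$-cover of $S^1\vee S^2$ already has $b_2$ growing linearly in the degree --- and one must genuinely exploit the factorization through $BA_\sigma$ coming from asphericity. Turning this factorization into a uniform chain-level bound, simultaneously over $\bbF_p$ coefficients and integer torsion and uniformly in $\sigma$, is the technical heart of the proof; alternatively one could bypass the open-cover spectral sequence altogether and work directly with a $\Gamma$-CW model of dimension $\le n-1$ with amenable stabilizers built from $\mathcal{U}$, the vanishing then being extracted cellwise from the approximation theorems above.
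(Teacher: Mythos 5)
Your proposal stops exactly where the real work begins, and the gap you flag yourself is genuine and not fixable by the results you propose to invoke. The sets $U_{\sigma}$ are open subsets of $M$, not aspherical spaces: amenability of $A_\sigma=\im(\pi_1(U_\sigma)\to\Gamma)$ says nothing about the homology of the covers $U_{\sigma,i}$ themselves, which is what your \v{C}ech $E^1$-page requires. L\"uck/Elek-type approximation applied to $\overline U_{\sigma,i}$ identifies the normalized limits with $\ell^2$- (or $\bbF_p$-$\ell^2$-) invariants of the $A_\sigma$-\emph{space} covering $U_\sigma$, not of the group $A_\sigma$, so Cheeger--Gromov vanishing is not available; your own $S^1\vee S^2$ example shows the limits need not vanish. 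The factorization $U_\sigma\to BA_\sigma\to B\Gamma$ coming from asphericity only controls the \emph{map} $H_*(U_{\sigma,i})\to H_*(M_i)$, and the Mayer--Vietoris spectral sequence does not see that: its $E^1$-terms are the full groups $H_q(U_{\sigma,i})$. Moreover, there is no ``Abert--Nikolov'' theorem giving sublinear torsion growth along residual chains of amenable groups to quote; controlling torsion growth is precisely the hard point. Independently of this, your reduction step is already wrong for torsion: passing from $E^1$ to $E^\infty$ means taking homology repeatedly, and the torsion of such a subquotient is \emph{not} bounded by the torsion of the $E^1$-terms (e.g.\ $\coker(\bbZ\xrightarrow{N}\bbZ)=\bbZ/N$ has huge torsion although both terms are torsion-free); the inequality $|\tors B|\le|\tors A|\cdot|\tors C|$ only controls the extensions along the filtration, not the differentials.

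The paper's proof is built to avoid both problems. It never estimates the homology of pieces of a cover of $M_i$; instead it shows that the \emph{fundamental class} of $M_i$ can be represented by an integral cycle with $o([\Gamma:\Gamma_i])$ singular $n$-simplices, and then bounds all $\bbF_p$-Betti numbers and $\log|\tors H_k(M_i;\bbZ)|$ by the number of simplices in such a cycle, using Poincar\'e duality on the based subcomplex $C_*^\tau(M_i)$ together with the Gabber--Soul\'e norm estimate (there the differentials have norm at most $j+1$, which is what makes torsion controllable). The small fundamental cycle is produced dynamically: one works on $X\times\widetilde M$ with $X=\varprojlim\Gamma/\Gamma_i$, builds $\Gamma$-equivariant measurable covers from Ornstein--Weiss monotiles in the amenable subgroups $\Lambda_i$, approximates by cylindrical sets, transfers to $\Gamma/\Gamma_q\times\widetilde M$, and uses asphericity to get a homotopy retract of $M_q$ onto the quotient of the nerve, which has few $n$-simplices. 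Your alternative suggestion (a low-dimensional $\Gamma$-CW model with amenable stabilizers) runs into the same difficulty of extracting integral torsion bounds cellwise; without an argument replacing the fundamental-class mechanism, the proposal does not yield the theorem.
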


\begin{remark}[Relation to Gromov's vanishing theorem]
      Theorems~\ref{thm: gap theorem} and~\ref{thm: amenable covers} remind of 
	  the \emph{isolation theorem} and the \emph{vanishing theorem} 
	  in Gromov's seminal work~\cite{gromov}*{0.5. and~3.1.}, which under the 
	  same assumptions conclude the vanishing of the simplicial volume. The formal 
	  deduction of Theorem~~\ref{thm: gap theorem} from Theorem~\ref{thm: amenable covers} 
	  corresponds to Gromov's deduction of the isolation theorem from the 
	  vanishing theorem. Gromov's proof of the vanishing theorem is based on 
	  bounded cohomology and I do not see how to deduce something for 
	  the integral homology with this method. We develop a different approach in 
	  Section~\ref{sec:amenable_covers}. 
 \end{remark}

\begin{remark}
Vanishing results for the homology growth of aspherical spaces whose 
fundamental groups contain an infinite, normal, elementary amenable subgroup 
are proved in~\cite{lueck-homology}. The methods there are completely different 
from ours. If a normal amenable subgroup of $\pi_1(M)$ arises as the fundamental group 
of the fiber of a fiber bundle $F\to M\to N$ of closed aspherical manifolds, 
the assumptions in the previous theorem are satisfied for $M$ according 
to~\cite{gromov}*{Corollaries~(2) on p.~41}. 
\end{remark}

\begin{remark}[Consequences for $\ell^2$-Betti numbers]\label{rem: consequences l2 betti}
L\"uck's approximation theorem~\cite{lueck} 
yields as a corollary of 
Theorems~\ref{thm: gap theorem} 
and~\ref{thm: amenable covers} that all $\ell^2$-Betti
numbers of $M$ vanish. This has been proved earlier by the
author~\cite{sauer-volume}*{Corollary of Theorem~B}. 
Similarly, in Corollary~\ref{cor: residual chain} all $\ell^2$-Betti numbers of $M$ are bounded by
$C(n,V_0)\vol(M)$, which generalizes~\cite{sauer-volume}*{Corollary of Theorem~A}
\end{remark}

\subsection{On the proofs} 
As explained before, Theorem~\ref{thm: gap theorem} is a consequence of 
Theorem~\ref{thm: amenable covers}. The proofs 
of Theorems~\ref{thm: main thm volume} and~\ref{thm: amenable covers} start by 
a reduction to the orientable case. If there was no $2$-torsion in the 
homology groups in question, the reduction would be just an easy transfer argument. Of course, 
we do not want to assume that, so the reduction argument requires more care. This is done 
in Section~\ref{sec: reduction}. We may henceforth assume that all manifolds are 
oriented. 

The broad theme of this paper is the relation between the homology growth on aspherical 
manifolds and volume. Since the volume of an oriented 
Riemannian manifold~$M$ is related to 
its homology through 
the volume form or its dual, the fundamental class, this suggests 
an important role of the fundamental class in our proofs. We first present an  
outline of the proof of Theorem~\ref{thm: amenable covers}. 

Let $(M_i)$ be a residual tower of finite coverings of the closed aspherical manifold $M$ in question which 
is associated to a residual chain $(\Gamma_i)$ of the fundamental group. 
The proof consists of two major steps. 

\begin{enumerate}
	\item Bound the integral complexity of the fundamental class 
	\[[M_i]\in H_n(M_i;\bbZ)\cong H_n(\Gamma_i;\bbZ).\] 
	By \emph{integral 
	complexity} we mean the minimal number $l\in \bbN$ such that 
	the fundamental class is represented as an integral linear combination 
	of $l$ singular simplices. 
	\item Bound the size of the homology (torsion and free part) of $M_i$, thus $\Gamma_i$, in arbitrary degrees in terms of the integral complexity of $[M_i]$. 	
\end{enumerate} 
The second step is dealt with in Section~\ref{sec:fundamental class}. 
The first step, 
on which we elaborate now, is done in Section~\ref{sec:amenable_covers}. 
A central object of the proof is the profinite topological $\Gamma$-space 
\begin{equation}\label{eq: projective limit}
	X:=\varprojlim \bigl(\Gamma/\Gamma_0\leftarrow \Gamma/\Gamma_1\leftarrow\Gamma/\Gamma_2\leftarrow\ldots\bigr). 
\end{equation}
The space $X$ is also a compact topological group; we endow $X$ with the normalized 
Haar measure~$\mu$. To consider the space $X$ is motivated by the work of Ab{\'e}rt 
and Nikolov~\cite{abert+nikolov}  
who relate the cost of the $\Gamma$-action on $X$ to the rank gradient of $(\Gamma_i)$. 
We take the cover of $M$ by amenable subsets that figures in the assumption of 
Theorem~\ref{thm: amenable covers} and produce from it -- by dynamical considerations -- a different, $\Gamma$-equivariant measurable cover 
of a different object, namely the space $X\times\widetilde M$ endowed with the diagonal 
$\Gamma$-action. Each set in this cover is a product of a measurable set in $X$ and an open set in $\widetilde M$. 
We modify the measurable cover a little (Lemma~\ref{lem: approximation by cylindrical}) 
so that the measurable set in $X$ is cylindrical with respect to 
the profinite topology. 

Next we convert this measurable cover to a cover of the diagonal $\Gamma$-space 
$\Gamma/\Gamma_i\times\widetilde M$ for sufficiently large $i\in\bbN$ -- see~\eqref{eq: cover from correspondence principle} for the definition 
of the associated cover of $\Gamma/\Gamma_i\times\widetilde M$. 
The nerve of the cover of $\Gamma/\Gamma_i\times\widetilde M$ is a $\Gamma$-space; we denote its orbit space, which is 
a $\Delta$-complex,  
by $S(i)$. With an open cover and an associated partition of unity comes a map from the space to the nerve; in our situation the nerve map 
is equivariant and we will study the induced map $f$ on orbit spaces: 
\[    \Gamma\bs \bigl(\Gamma/\Gamma_i\times \widetilde M\bigr)\cong  \Gamma_i\bs\widetilde M=\xymatrix{  M_i\ar[r]^f &
  S(i)\ar@/^1pc/[l]^g } , 
\]
From asphericity we conclude that there is a homotopy retract $g\circ f\simeq\id$. The construction of the 
measurable cover above has been set up in such a way that the number of $n$-simplices in $S(i)$ is of magnitude $o([\Gamma:\Gamma_i])$. 
Finally, the homotopy retract implies that the integral complexity of $[M_i]$ is  $o([\Gamma:\Gamma_i])$, concluding the argument for the first step. 

The proof of Theorem~\ref{thm: main thm volume} also uses nerves of covers (albeit 
very different ones). 
It is much shorter than the proof of Theorem~\ref{thm: amenable covers} 
since the assertion is a rather direct consequence of Sections~\ref{sec: reduction} 
and~\ref{sec:fundamental class} and the remarkable work of Guth~\cite{guth}.

\section{Reduction to the orientable case}\label{sec: reduction}
 
\subsection{Estimates by subgroups of index~$2$}

\begin{lemma}\label{lem: short exact}
	If $0\to A\to B\to C$ is an exact sequence of finitely generated abelian groups, 
	then $|\tors B|\le |\tors A|\cdot |\tors C|$. 
\end{lemma}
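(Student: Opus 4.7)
The plan is to track where the torsion of $B$ can live, using exactness to split it into a piece inside $A$ and a piece that maps injectively into $C$.

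First, I would identify $A$ with its image in $B$ via the injection $A \hookrightarrow B$, so that $\tors A = \tors B \cap A$: any torsion element of $B$ that happens to lie in $A$ is a torsion element of $A$, and vice versa. Next, by the second isomorphism theorem, $\tors B / \tors A = \tors B/(\tors B \cap A)$ is isomorphic to the subgroup $(\tors B + A)/A$ of $B/A$. By exactness of $0 \to A \to B \to C$, the map $B/A \to C$ is injective, so we obtain an injection $\tors B/\tors A \hookrightarrow C$.

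The group $\tors B / \tors A$ is a quotient of a finite group (since $B$ is finitely generated, $\tors B$ is finite), hence is itself torsion. Therefore its injective image in $C$ lands inside $\tors C$, giving $|\tors B / \tors A| \le |\tors C|$. Combining with $|\tors B| = |\tors A| \cdot |\tors B/\tors A|$ yields the desired inequality $|\tors B| \le |\tors A| \cdot |\tors C|$.

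There is no real obstacle here; the only point requiring any care is to observe that $\tors B / \tors A$ is already a torsion group so that its image in $C$ automatically lies in $\tors C$, rather than trying to relate $\tors(B/A)$ to $\tors B/\tors A$ directly (which need not coincide in general).
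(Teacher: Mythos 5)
Your proof is correct and is essentially the paper's argument in expanded form: the paper simply notes that the sequence restricts to an exact sequence $0\to \tors A\to \tors B\to \tors C$ of finite groups, and your second-isomorphism-theorem computation (identifying $\tors A=\tors B\cap A$ and injecting $\tors B/\tors A$ into $\tors C$) is exactly what verifies that exactness and extracts the counting. So there is no gap and no genuinely different route, just a more detailed write-up of the same idea.
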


\begin{proof}
	The given exact sequence restricts to an exact sequence \[0\to \tors A\to \tors B\to \tors C\] 
	of finite abelian groups from which the assertion follows. 
\end{proof}

\begin{lemma}\label{lem: less exact}
	If $A\to B\to C$ is an exact sequence of finitely generated abelian groups and 
	$A$ is torsion, then $|\tors B|\le |\tors A|\cdot |\tors C|$. 
\end{lemma}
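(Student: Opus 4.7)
The plan is to reduce to Lemma~\ref{lem: short exact}. Let $f\colon A\to B$ and $g\colon B\to C$ denote the maps in the exact sequence, and set $I\defq \im(f)=\ker(g)\subset B$. Since $A$ is finitely generated and torsion, $A$ is finite, so $I$ is also finite; in particular $|I|\le |A|=|\tors A|$.

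From the exact sequence I extract two short exact sequences. First,
\[
0\to I\to B\to B/I\to 0,
\]
to which Lemma~\ref{lem: short exact} applies, yielding
\[
|\tors B|\le |\tors I|\cdot |\tors(B/I)|\le |I|\cdot |\tors(B/I)|.
\]
Second, the map $g$ factors as $B\twoheadrightarrow B/I\hookrightarrow C$, so $B/I$ injects into $C$, and consequently $\tors(B/I)$ injects into $\tors C$. Thus $|\tors(B/I)|\le |\tors C|$.

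Combining these two estimates gives
\[
|\tors B|\le |I|\cdot |\tors C|\le |\tors A|\cdot |\tors C|,
\]
as desired. There is no serious obstacle here—the only slightly delicate point is to recognize that $A$ being finitely generated \emph{and} torsion forces $A$ to be finite, so that $|I|$ is bounded by $|A|=|\tors A|$; without finite generation the statement would fail.
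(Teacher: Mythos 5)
Your proof is correct and follows essentially the same route as the paper: replace $A$ by its image $I=\im(f)\cong A/\ker(f)$, note $|I|\le|A|=|\tors A|$, and invoke Lemma~\ref{lem: short exact}. The only cosmetic difference is that the paper applies Lemma~\ref{lem: short exact} directly to the exact sequence $0\to I\to B\to C$ (that lemma does not require surjectivity onto $C$), so your extra passage through $B/I$ and the injection $\tors(B/I)\hookrightarrow\tors C$ is not needed, though it is harmless.
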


\begin{proof}
	Let $f$ be the map from $B$ to $C$. Then $0\to A/\ker(f)\to B\to C$ is exact and 
	$|\tors A|=|A|\ge |A/\ker(f)|$. Now apply the previous lemma. 
\end{proof}

The following two lemmas are only needed for trivial coefficients but their proofs do 
not become more complicated in the stated generality. 

\begin{lemma}\label{lem: reduction torsion}
	Let $\Gamma$ be a group of type $F_\infty$ and 	$\Lambda<\Gamma$ a subgroup 
	of index $2$. Let $W$ be a $\bbZ[\Gamma]$-module that is finitely generated free 
	as a $\bbZ$-module. 
	For $n\in\bbN$ we have 
	\[
		\log |\tors H_n(\Gamma, W)|\le \sum_{k\le n} \rank_\bbZ H_k(\Lambda; W)+2^n\sum_{k\le n}  \log |\tors H_k(\Lambda; W)|.
	\]
\end{lemma}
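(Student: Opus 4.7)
The plan is to apply the Lyndon--Hochschild--Serre spectral sequence
\[
E^2_{p,q} = H_p(\bbZ/2;H_q(\Lambda;W)) \Rightarrow H_{p+q}(\Gamma;W)
\]
attached to the normal index-$2$ extension $1\to\Lambda\to\Gamma\to\bbZ/2\to 1$, and to track how torsion is accumulated along the resulting filtration of $H_n(\Gamma;W)$. Writing $A_q:=H_q(\Lambda;W)$, the goal is to prove $|\tors E^2_{p,q}|\le 2^{\rank A_q}\cdot|\tors A_q|$ uniformly in $p,q$, then check $|\tors E^\infty_{p,n-p}|\le|\tors E^2_{p,n-p}|$, and finally assemble the estimate using Lemma~\ref{lem: short exact}.

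First I would bound the $E^2$-page. For $p\ge 1$ the group $E^2_{p,q}$ is annihilated by the index $2$, and the $4$-periodic Tate complex presents $H_p(\bbZ/2;A_q)$ as a subquotient of $A_q/2A_q$, whose order is at most $2^{\rank A_q}\cdot|\tors A_q|$. For $p=0$ one has $E^2_{0,q}=(A_q)_{\bbZ/2}$, a quotient of $A_q$; this is the delicate case, because passing to coinvariants can create torsion (e.g.\ $\bbZ$ with the sign action becomes $\bbZ/2$). I would split off torsion via the equivariant short exact sequence $0\to\tors A_q\to A_q\to A_q/\tors A_q\to 0$ and apply the right-exact functor of coinvariants. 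The torsion of $(A_q/\tors A_q)_{\bbZ/2}$ can then be controlled by decomposing the free $\bbZ[\bbZ/2]$-module $A_q/\tors A_q$ into its trivial, sign, and regular summands --- or, equivalently, by noting that the operator $1-g$ satisfies $(1-g)^2=2(1-g)$, which forces its elementary divisors on the free part to lie in $\{0,1,2\}$. Either way the coinvariants of a free rank-$r$ $\bbZ[\bbZ/2]$-module carry torsion of size at most $2^r$, and Lemma~\ref{lem: short exact} combines the two pieces to give $|\tors(A_q)_{\bbZ/2}|\le 2^{\rank A_q}\cdot|\tors A_q|$.

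Next I would verify $|\tors E^\infty_{p,n-p}|\le|\tors E^2_{p,n-p}|$. For $p\ge 1$ a subquotient of a finite $2$-torsion group has no greater order, so this is immediate. For $p=0$, the group $E^\infty_{0,n}$ is obtained from $E^2_{0,n}$ by quotienting out the image of the differentials $d^r$ with $r\ge 2$, whose sources $E^r_{r,n-r+1}$ are all $2$-torsion; hence one quotients $E^2_{0,n}$ by a subgroup contained in its torsion part, which only decreases the torsion. The abutment now provides a filtration $0=F_{-1}\subseteq\cdots\subseteq F_n=H_n(\Gamma;W)$ with $F_p/F_{p-1}=E^\infty_{p,n-p}$; iterating Lemma~\ref{lem: short exact} along this filtration gives $|\tors H_n(\Gamma;W)|\le\prod_{p=0}^n|\tors E^\infty_{p,n-p}|$, and taking logarithms together with $\log 2\le 1\le 2^n$ yields the claimed inequality.

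The hard part is the $p=0$ estimate $|\tors(A_q)_{\bbZ/2}|\le 2^{\rank A_q}\cdot|\tors A_q|$: the naive subgroup/quotient arguments fail here because the functor of coinvariants can enlarge torsion. Reducing to the torsion-free case and exploiting the explicit action of $1-g$ on a free $\bbZ[\bbZ/2]$-module is the crucial input; once that is in hand, everything else is routine bookkeeping with the spectral sequence and Lemma~\ref{lem: short exact}.
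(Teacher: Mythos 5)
Your proof is correct, and it follows the same skeleton as the paper: the Lyndon--Hochschild--Serre spectral sequence for $1\to\Lambda\to\Gamma\to C_2\to 1$, the observation that $E^\infty$-torsion is controlled by $E^2$-torsion (for $p=0$ because one only quotients by images of torsion groups), the splitting $0\to\tors A_q\to A_q\to A_q/\tors A_q\to 0$, and bookkeeping via the elementary torsion lemmas. Where you differ is in how the $C_2$-homology is estimated, and your route is in fact sharper. The paper bounds $H_k(C_2;\tors N_j)$ using the bar resolution, whose $k$-th chain group is $(\tors N_j)^{2^k}$; this is the sole source of the factor $2^n$ in the statement. You instead use the small periodic resolution of $\bbZ$ over $\bbZ[C_2]$, which exhibits $H_p(C_2;A_q)$ for $p\ge 1$ as a $2$-torsion subquotient of $A_q$, hence of order at most $2^{\rank A_q}\cdot|\tors A_q|$, and for $p=0$ you only need $|(\tors A_q)_{C_2}|\le|\tors A_q|$; summing over $p+q=n$ this proves the inequality with coefficient $1$ in place of $2^n$, so a fortiori the lemma. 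Your second deviation is the treatment of the lattice $A_q/\tors A_q$: the paper invokes the Diederichsen--Reiner classification of indecomposable $\bbZ[C_2]$-lattices, whereas your identity $(1-g)^2=2(1-g)$ gives an elementary substitute (if $ny=(1-g)x$ then $n(2y-(1-g)y)=0$, so $2y\in\im(1-g)$; thus the torsion of the coinvariants is $2$-elementary on at most $\rank A_q$ generators, of order at most $2^{\rank A_q}$), which is a nice simplification. Only cosmetic slips: the right-exact sequence of coinvariants needs Lemma~\ref{lem: less exact} rather than Lemma~\ref{lem: short exact}, since $(\tors A_q)_{C_2}\to (A_q)_{C_2}$ need not be injective; the module $A_q/\tors A_q$ is a $\bbZ$-free $\bbZ[C_2]$-module (a lattice), not a free $\bbZ[C_2]$-module; and the complete resolution for $C_2$ is $2$-periodic, not $4$-periodic. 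None of these affects the argument.
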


\begin{proof}
	The finiteness condition $F_\infty$ ensures that all homology group to be considered 
	are finitely generated. 
	We consider the Lyndon-Hochschild-Serre spectral sequence for the group extension 
	$0\to \Lambda\to\Gamma\to C_2\to 0$, where $C_2$ is the cyclic group of order~$2$: 
	\[
		E_{p,q}^2=H_p\bigl(C_2; H_q(\Lambda; W)\bigr)\Rightarrow H_{p+q}(\Gamma; W)
	\]
	For $p>0$ the group $E_{p,q}^2$ is torsion, hence also $E_{p,q}^\infty$. 
	The convergence of the spectral sequence means that there is an increasing filtration 
	$F^iH_k(\Gamma;W)$ of $H_k(\Gamma;W)$ with $F^{k}H_{k}(\Gamma; W)=H_k(\Gamma;W)$ 
	and $E_{0,k}^\infty\cong F^0H_k(\Gamma;W)$ 
	and short exact sequences 
	\[
		0\to F^iH_k(\Gamma;W)\to F^{i+1}H_k(\Gamma;W)\to E_{i+1, k-i-1}^\infty\to 0
	\]
	for $i=0,\ldots, k-1$. Let $N_j:=H_j(\Lambda;W)$. It follows inductively from these 
	short exact sequences and Lemma~\ref{lem: short exact} that 
	\begin{equation}\label{eq: from spectral sequence}
		\log |\tors H_{k}(\Gamma;W)|= \sum_{i=0}^{k} \log |\tors E_{i,k-i}^\infty|\le 
		\sum_{i=0}^k \log |\tors H_i(C_2; N_{k-i})|.
	\end{equation}
	 The torsion submodule of $N_j$ is a $\bbZ[C_2]$-submodule 
	of $N_j$. So there is an exact sequence of $\bbZ[C_2]$-modules 
	\begin{equation}\label{eq: short exact}
		0\to \tors N_j\to N_j\to F_j\to 0, 
	\end{equation}
    where $F_j$ is $\bbZ$-free. The $\bbZ[C_2]$-module $F_j$ decomposes as a 
	direct sum of indecomposable $\bbZ[C_2]$-modules: 
	\[
		F_j\cong F_j^1\oplus\ldots\oplus F_j^l
	\]
	By a (much more general) result of Diederichsen and Reiner~\cite{curtis+reiner}*{Theorem~(34.31) on p.~729} 
	there are only three 
	isomorphism types of 
	indecomposable $\bbZ[C_2]$-modules 
	which are finitely generated free as $\bbZ$-modules: the trivial module $\bbZ$, 
	the module $\bbZ^\mathrm{tw}$ with underlying $\bbZ$-module $\bbZ$ on which the generator of $C_2$ 
	acts by multiplication with  $-1$ and $\bbZ[C_2]$.Our use of  
	the Diederichsen-Reiner result is motivated by~\cite{emery}*{Remark~2}. 
	If $N\in \{\bbZ, \bbZ^\mathrm{tw}, \bbZ[C_2]\}$, 
	then by direct computation $|\tors H_j(C_2; N)|\le 2$. Since $l\le \rank_\bbZ(N_j)$, 
	we obtain that 
	\[
	      \log |\tors H_k(C_2; F_j)|\le \rank_\bbZ (N_j)\log(2)\le \rank_\bbZ(N_j).
	\]
	The $k$-th chain group in the standard bar resolution computing 
	$H_k(C_2;\tors N_j)$ is isomorphic to $(\tors N_j)^{2^k}$, which implies that  
	\[
		 \log |\tors H_k(C_2; \tors N_j)|\le 2^k\log |\tors N_j|.
	\]
	The short exact sequence~\eqref{eq: short exact} induces a long exact sequence 
	in group homology: 
	\[
		\ldots H_{k+1}(C_2;F_j)\to H_k(C_2; \tors N_j)\to H_k(C_2; N_j)\to H_k(C_2; F_j)\to H_{k-1}(C_2; \tors N_j)\ldots
	\]
	Since $H_k(C_2;\tors N_j)$ is torsion for $k\ge 0$, Lemma~\ref{lem: less exact} implies that 
	\begin{align*}
		\log |\tors H_k(C_2; N_j)| &\le \log |\tors H_k(C_2; \tors N_j)|+\log |\tors H_k(C_2; F_j)|\\
		&\le 2^k\log |\tors N_j|+\rank_\bbZ (N_j)
	\end{align*}
	for $k\ge 0$. Combined with~\eqref{eq: from spectral sequence} this concludes the proof. 
\end{proof}

\begin{lemma}\label{lem: orientable rank reduction}
	Let $\Gamma$ be a group of type $F_\infty$ and 	$\Lambda<\Gamma$ a subgroup 
	of index $2$. Let $W$ be a $\bbF_p[\Gamma]$-module that is finitely generated free 
	as a $\bbF_p$-module. 
	For $n\in\bbN$ we have 
	\[
		\dim_{\bbF_p} H_n(\Gamma, W)\le 2^{n}\sum_{k\le n} \dim_{\bbF_p} H_k(\Lambda; W).
	\]
\end{lemma}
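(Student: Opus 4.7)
The plan is to mimic the proof of Lemma~\ref{lem: reduction torsion}, specialised to $\bbF_p$-coefficients. Two simplifications arise in this setting. First, over a field, dimension is additive on short exact sequences, so the passage from $E^2$ to $E^\infty$ to $H_*(\Gamma;W)$ preserves dimension in the sense that $\dim_{\bbF_p}H_n(\Gamma;W)$ equals the sum of the dimensions of the $E_{p,q}^\infty$-entries with $p+q=n$, each of which is bounded above by the dimension of the corresponding $E_{p,q}^2$-entry. Second, we do not need to split a $\bbZ[C_2]$-lattice into its torsion and free parts, so the Diederichsen--Reiner step disappears entirely.

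Concretely, I would set up the Lyndon--Hochschild--Serre spectral sequence
\[E_{p,q}^2=H_p\bigl(C_2;H_q(\Lambda;W)\bigr)\Rightarrow H_{p+q}(\Gamma;W)\]
for the extension $1\to\Lambda\to\Gamma\to C_2\to 1$. The hypothesis $F_\infty$, together with $W$ being finite-dimensional over $\bbF_p$, ensures that all entries are finite-dimensional. Filtering $H_n(\Gamma;W)$ as in the proof of Lemma~\ref{lem: reduction torsion} and applying additivity of $\dim_{\bbF_p}$ on short exact sequences yields
\[\dim_{\bbF_p}H_n(\Gamma;W)\le\sum_{p+q=n}\dim_{\bbF_p}H_p\bigl(C_2;H_q(\Lambda;W)\bigr).\]

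Next I would bound $\dim_{\bbF_p}H_p(C_2;N)\le 2^p\dim_{\bbF_p}N$ for any finite-dimensional $\bbF_p[C_2]$-module $N$, via the same bar-resolution argument used in Lemma~\ref{lem: reduction torsion}: the $p$-th chain group of the bar complex of $C_2$ tensored with $N$ over $\bbZ[C_2]$ is isomorphic to $N^{2^p}$, of $\bbF_p$-dimension $2^p\dim_{\bbF_p}N$, and $H_p(C_2;N)$ is a subquotient thereof. Substituting and using $2^p\le 2^n$ for $p\le n$ then gives
\[\dim_{\bbF_p}H_n(\Gamma;W)\le 2^n\sum_{k=0}^n\dim_{\bbF_p}H_k(\Lambda;W),\]
as required.

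I do not anticipate a real obstacle: the proof is essentially the $\bbF_p$-shadow of Lemma~\ref{lem: reduction torsion}, with the Diederichsen--Reiner detour deleted and the logarithm-of-cardinality estimates replaced by $\bbF_p$-dimensions. The only point that deserves a brief sanity check is that over a field every $E^\infty_{p,q}$ is a subquotient of $E^2_{p,q}$, which is immediate.
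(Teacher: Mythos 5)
Your argument is correct and is essentially the paper's own proof: the Lyndon--Hochschild--Serre spectral sequence for $1\to\Lambda\to\Gamma\to C_2\to 1$ combined with the bar-resolution bound $\dim_{\bbF_p}H_i(C_2;N)\le 2^i\dim_{\bbF_p}N$, with no need for the Diederichsen--Reiner step. No gaps to report.
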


\begin{proof}
	The proof is easier than the one of the previous lemma. The Lyndon-Hochschild-Serre 
	spectral sequence yields that 
	\[
		\dim_{\bbF_p} H_n(\Gamma; W)\le \sum_{i=0}^n \dim_{\bbF_p} H_i(C_2; H_{n-i}(\Lambda;W)).
	\]
	As the $k$-th chain group in the standard bar resolution computing 
	$H_i(C_2; H_{n-i}(\Lambda;W))$ is isomorphic to a sum of ${2^i}$ copies of 
	$H_{n-i}(\Lambda;W)$, we have 
	\[
		\dim_{\bbF_p}  H_i(C_2; H_{n-i}(\Lambda;W))\le 2^i\dim_{\bbF_p}H_{n-i}(\Lambda;W).\qedhere
	\]
\end{proof}

\subsection{Reduction of Theorems~\ref{thm: main thm volume}  and~\ref{thm: amenable covers} 
	to the orientable case}\label{sub: reduction}
	Let us assume that 
	Theorem~\ref{thm: amenable covers} holds for the orientable 
	case.
Let $M$ be a connected $n$-dimensional 
closed aspherical non-orientable manifold with fundamental group $\Gamma$. 
By asphericity the group homology of $\Gamma$ and the homology of $M$ 
are isomorphic. 
There is a unique connected $2$-sheeted cover 
$\bar M\to M$ such that $\bar M$ is orientable. The fundamental group 
$\Gamma'=\pi_1(\bar M)$ embeds into $\Gamma$ as a subgroup of index~$2$. 
If $(\Gamma_i)$ is a residual chain of $\Gamma$, then 
$(\Gamma'\cap \Gamma_i)$ is a residual chain of $\Gamma'$. Moreover, 
$\Gamma'\cap\Gamma_i$ is either equal to $\Gamma_i$ or a subgroup of 
$\Gamma_i$ of index~$2$. 

Let us assume that $M$ is covered by open, amenable subsets 
with multiplicity $\le n$. Then their preimages cover $\bar M$ with multiplicity $\le n$ 
and are amenable. 
By Theorem~\ref{thm: amenable covers} applied to $\bar M$ and 
the universal coefficient theorem we have 
\begin{align}\label{eq: orientable vanishing}
	\lim_{i\to\infty} \frac{1}{[\Gamma':\Gamma'\cap\Gamma_i]}\sum_{k=0}^n \rank_\bbZ H_k\bigl(\Gamma'\cap\Gamma_i; \bbZ\bigr)=&0,\\
	\lim_{i\to\infty}
	\frac{1}{[\Gamma':\Gamma'\cap\Gamma_i]}\sum_{k=0}^n \log|\tors H_k(\Gamma'\cap\Gamma_i; \bbZ)|&=0.\notag
\end{align}
Thus Lemma~\ref{lem: reduction torsion} yields 
\[
	\lim_{i\to\infty}
	\frac{1}{[\Gamma:\Gamma_i]}\log|\tors H_k(\Gamma_i; \bbZ)|=0
\]
for every $k$. 
From~\eqref{eq: orientable vanishing} with $\bbZ$ replaced by $\bbF_p$ 
and Lemma~\ref{lem: orientable rank reduction} we obtain that 
\[
	\lim_{i\to\infty} \frac{1}{[\Gamma:\Gamma_i]}\dim_{\bbF_p} H_k(\Gamma_i; \bbF_p)=0.
\]
The reduction for Theorem~\ref{thm: main thm volume} via 
Lemmas~\ref{lem: reduction torsion} and~\ref{lem: orientable rank reduction} is along similar lines.

\section{Bounds by the fundamental class} 
\label{sec:fundamental class}

The following lemma is due to Soul{\'e}~\cite{soule}*{Lemma~1} who gives credit to Gabber.

\begin{lemma}\label{lem: soule}
	Let $A$ and $B$ be finitely generated free $\bbZ$-modules. Let 
	$a_1,\ldots, a_n$ and $b_1,\ldots, b_m$ be $\bbZ$-bases of 
	$A$ and $B$, respectively. We endow  
	$B_\bbC=\bbC\otimes_\bbZ B$ with the Hilbert space 
	structure for which $b_1,\ldots, b_m$ is a Hilbert basis. 
	Let $f\colon A\to B$ be a homomorphism. Let $I\subset\{1,\ldots, n\}$ be a 
	subset such that $\{f(a_i)\mid i\in I\}$ is a basis of $\im(f)_\bbC$. 
	Then 
	\[
		|\tors\coker(f)|\le \prod_{i\in I} \norm{f(a_i)}.
	\]
\end{lemma}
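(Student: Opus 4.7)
The plan is to reinterpret $\abs{\tors\coker(f)}$ as an index between two sublattices of $B$, and then bound this index by a ratio of covolumes to which Hadamard's inequality applies.

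First, let $\Lambda\subset B$ denote the saturation of $\im(f)$, that is, the set of $b\in B$ such that $kb\in\im(f)$ for some nonzero $k\in\bbZ$. Equivalently, $\Lambda$ is the preimage of $\tors(B/\im(f))$ under the projection $B\to B/\im(f)$, so $\Lambda$ is a direct summand of $B$ and $B/\Lambda$ is free. The short exact sequence
\[
0\to \Lambda/\im(f)\to \coker(f)\to B/\Lambda\to 0
\]
identifies $\tors\coker(f)$ with the finite group $\Lambda/\im(f)$. Writing $L\subset B$ for the sublattice generated by $\{f(a_i)\mid i\in I\}$, one has $L\subset\im(f)\subset\Lambda$, and hence $\abs{\tors\coker(f)}\le [\Lambda:L]$.

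Second, I would express $[\Lambda:L]$ as a ratio of covolumes in the real subspace $V:=\Lambda\otimes_\bbZ\bbR\subset B_\bbR$, equipped with the restriction of the inner product making $b_1,\ldots,b_m$ orthonormal. Setting $r=\abs{I}$, which equals $\rank\Lambda=\dim_\bbC\im(f)_\bbC$ since $\{f(a_i)\mid i\in I\}$ is a $\bbC$-basis of $\im(f)_\bbC$, we have $[\Lambda:L]=\operatorname{covol}_V(L)/\operatorname{covol}_V(\Lambda)$. Via the Pl\"ucker correspondence, $\operatorname{covol}_V(L)$ equals the norm of $\bigwedge_{i\in I} f(a_i)$ in $\bigwedge^r B_\bbC$, which Hadamard's inequality bounds by $\prod_{i\in I}\norm{f(a_i)}$.

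Third, I would argue $\operatorname{covol}_V(\Lambda)\ge 1$. Choosing a $\bbZ$-basis $v_1,\ldots,v_r$ of $\Lambda$, this covolume is the norm of $v_1\wedge\cdots\wedge v_r$ in $\bigwedge^r B_\bbC$, and because $\Lambda$ is saturated in $B$ this wedge is a primitive element of the integer lattice $\bigwedge^r B$. Since the vectors $b_{j_1}\wedge\cdots\wedge b_{j_r}$ with $j_1<\cdots<j_r$ form an orthonormal $\bbZ$-basis of $\bigwedge^r B$ inside $\bigwedge^r B_\bbC$, every nonzero integer vector has norm at least~$1$. Combining the three steps yields the claimed bound. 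The main nontrivial ingredient is the primitivity of the wedge of a $\bbZ$-basis of a saturated sublattice, a standard fact about exterior algebras over $\bbZ$ that deserves to be invoked explicitly; the remainder reduces to elementary lattice covolume computations together with Hadamard's inequality.
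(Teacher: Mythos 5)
Your argument is correct. Note that the paper itself does not prove this lemma at all: it is quoted from Soul\'e (Lemma~1 of \emph{Perfect forms and the Vandiver conjecture}), with credit to Gabber, and used as a black box. So what you have written is a self-contained substitute for that citation, and it holds up: the identification $\tors\coker(f)\cong\Lambda/\im(f)$ via the saturation $\Lambda$ of $\im(f)$, the inequality $[\Lambda:\im(f)]\le[\Lambda:L]$ for the sublattice $L$ spanned by $\{f(a_i)\mid i\in I\}$ (finite index because $\operatorname{rk} L=\dim_\bbC\im(f)_\bbC=\operatorname{rk}\Lambda$), the covolume formula $[\Lambda:L]=\operatorname{covol}(L)/\operatorname{covol}(\Lambda)$, Hadamard's bound $\operatorname{covol}(L)=\norm{\bigwedge_{i\in I}f(a_i)}\le\prod_{i\in I}\norm{f(a_i)}$, and the lower bound $\operatorname{covol}(\Lambda)\ge 1$ are all valid. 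One small simplification: in the last step you do not need primitivity of $v_1\wedge\cdots\wedge v_r$ in $\bigwedge^r B$; it suffices that it is a \emph{nonzero} element of $\bigwedge^r B$, since the vectors $b_{j_1}\wedge\cdots\wedge b_{j_r}$ form an orthonormal $\bbZ$-basis of that lattice, so every nonzero lattice vector has norm at least~$1$. (Saturation of $\Lambda$ is genuinely used only in the first step, to identify $\tors\coker(f)$ with $\Lambda/\im(f)$.) This is essentially the standard Gabber--Soul\'e argument, so your proof aligns with the cited source rather than conflicting with anything in the paper.
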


\subsection{The torsion estimate} 
\label{sub:the_torsion_estimate}

This subsection is devoted to the proof of the following result.

\begin{theorem}\label{thm: estimate torsion homology from integral simplicial volume}
	Let $M$ be a closed $n$-dimensional oriented manifold. 
	If the fundamental class of $M$ is represented by an integral cycle 
	with $k$ singular $n$-simplices, then 
	\begin{equation*}
		\log\bigl(|\tors H_j(M;\bbZ)|\bigr)\le \log(n+1)\cdot\binom{n+1}{j+1}\cdot k. 
	\end{equation*}
	for every $j\ge 0$. 
\end{theorem}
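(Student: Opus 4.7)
The plan is to realize $H_j(M;\bbZ)$ as a direct summand of the homology of a small chain complex $\Omega_*$ built from the faces of the simplices in the given cycle, and then to bound $\tors H_j(\Omega_*)$ via Soulé's Lemma~\ref{lem: soule}.

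First I would write the given cycle as $z=\sum_{i=1}^k \epsilon_i\sigma_i$ with $\epsilon_i\in\{\pm 1\}$ and define $\Omega_*$ to be the graded subgroup of $C_*(M;\bbZ)$ generated by all face restrictions $\sigma_i|_s$ for subsets $s\subseteq\{0,\dots,n\}$. Then $\Omega_*$ is a subcomplex (closed under $\partial$), its basis is a subset of the basis of $C_*(M;\bbZ)$ by distinct singular simplices, and $\Omega_j$ is free abelian of rank at most $k\binom{n+1}{j+1}$. The chain $\tilde z:=z$ is already a cycle in $\Omega_n$, because $\partial z=0$ in $C_{n-1}(M;\bbZ)$ and $\Omega_{n-1}\hookrightarrow C_{n-1}(M;\bbZ)$. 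Setting $\Omega^\ell:=\operatorname{Hom}(\Omega_\ell,\bbZ)$, the Alexander--Whitney formula
\[
\phi\cap\tilde z \ :=\ \sum_{i=1}^k\epsilon_i\,\phi\bigl(\sigma_i|_{[v_j,\dots,v_n]}\bigr)\,\sigma_i|_{[v_0,\dots,v_j]}
\]
defines a chain map $\cap\tilde z\colon\Omega^{n-*}\to\Omega_*$ compatible with the singular cap product $\cap z$ via the inclusion $r\colon\Omega_*\hookrightarrow C_*(M;\bbZ)$ and its dual restriction $r^*\colon C^*(M;\bbZ)\to\Omega^*$.

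The key step is that on (co)homology the composite
\[
H^{n-j}(M;\bbZ)\xrightarrow{r^*}H^{n-j}(\Omega_*)\xrightarrow{\cap\tilde z}H_j(\Omega_*)\xrightarrow{r_*}H_j(M;\bbZ)
\]
equals Poincaré duality $\cap[M]$, hence is an isomorphism. Consequently $\psi:=(\cap\tilde z)\circ r^*$ is a split monomorphism, which realizes $H_j(M;\bbZ)\cong H^{n-j}(M;\bbZ)$ as a direct summand of $H_j(\Omega_*)$; in particular
\[
|\tors H_j(M;\bbZ)|\le|\tors H_j(\Omega_*)|=\bigl|\tors\coker\bigl(\partial_{j+1}\colon\Omega_{j+1}\to\Omega_j\bigr)\bigr|.
\]

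Finally I would apply Soulé's Lemma to $\partial_{j+1}$, using the singular-simplex basis of $\Omega_*$ as the orthonormal basis. For any basis element $\tau\in\Omega_{j+1}$, the boundary $\partial\tau\in\Omega_j$ is an alternating sum of $j+2$ face simplices (after possibly collecting coincidences), hence $\|\partial\tau\|_{\ell^2}\le\|\partial\tau\|_{\ell^1}\le j+2\le n+1$. Picking the index set $I$ of Lemma~\ref{lem: soule} of size $\rank(\partial_{j+1})\le\rank\Omega_j\le k\binom{n+1}{j+1}$ then gives
\[
\log|\tors H_j(\Omega_*)|\le\sum_{\tau\in I}\log\|\partial\tau\|\le\log(n+1)\cdot\binom{n+1}{j+1}\cdot k,
\]
which is the claimed bound. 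The main obstacle is the direct-summand claim: once the chain-level cap product compatibility between $\Omega_*$ and $C_*(M;\bbZ)$ is set up carefully, the splitting is a formal consequence of $r_*\circ\psi=\cap[M]$ being an isomorphism, and the remaining estimate reduces to the rank count $k\binom{n+1}{j+1}$ and the norm bound $n+1$ fed into Soulé's Lemma.
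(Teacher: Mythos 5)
Your proposal is correct and is essentially the paper's own proof: the subcomplex $\Omega_\ast$ is exactly the paper's $C^\tau_\ast(M)$, the chain-level factorization of the cap product with the cycle through the subcomplex is precisely the content of Lemma~\ref{lem: poincare}, and the final estimate is the same application of Soul\'e's Lemma~\ref{lem: soule} with norm bound $\le n+1$ and rank bound $\binom{n+1}{j+1}k$ (the paper merely routes the bookkeeping through the dual complex $\hom_\bbZ(C^\tau_\ast(M),\bbZ)$ and the universal coefficient theorem, whereas you stay in $H_j(\Omega_\ast)$ and get the embedding of $H^{n-j}(M;\bbZ)$ directly, which is a cosmetic difference). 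One small correction: do not normalize the cycle as $z=\sum\epsilon_i\sigma_i$ with $\epsilon_i\in\{\pm1\}$ --- the hypothesis allows arbitrary integer coefficients $a_i$, and since none of your estimates involve these coefficients the argument goes through verbatim with general $a_i$.
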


\begin{defn}
	Suppose 
	$\tau=\sum_{i=1}^k a_i\cdot \sigma_i$ 
	is an integral singular cycle representing 
	the fundamental class 
	$[M]\in H_n(M; \bbZ)$. We define  
	$C_\ast^\tau(M)$ as 
	the subcomplex of the integral singular chain complex $C_\ast(M)$ 
	that is generated, in degree~$j$, by 
	all $j$-dimensional faces 
	of the $n$-simplices  $\sigma_1,\ldots, \sigma_k$. 
\end{defn}

\begin{lemma}\label{lem: poincare}
	Let $\tau$ be a representing cycle of the fundamental class of~$M$. 
	The restriction chain homomorphism 
		\[\hom_{\bbZ}(C_\ast(M), \bbZ)\to \hom_{\bbZ}(C_\ast^\tau(M),\bbZ)   \]
	induces injective maps in cohomology in all degrees. 
\end{lemma}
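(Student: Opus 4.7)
The natural tool is Poincar\'e duality for the closed oriented $n$-manifold $M$: capping with the fundamental class gives an isomorphism
\[
	\cap [M]\colon H^j(M;\bbZ)\xrightarrow{\cong} H_{n-j}(M;\bbZ).
\]
My plan is to show that any cohomology class $[\alpha]\in H^j(M;\bbZ)$ whose restriction to $H^j(C_\ast^\tau(M);\bbZ)$ vanishes satisfies $[\alpha]\cap[M]=0$, which by the above isomorphism forces $[\alpha]=0$. The key observation driving this is that $\tau=\sum_i a_i\sigma_i$ is a cycle representative of $[M]$ whose constituent simplices have all their front $j$-faces sitting inside $C_j^\tau(M)$ by construction, so a cocycle that vanishes on $C_\ast^\tau(M)$ automatically has trivial cap product with $\tau$ at the chain level.

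First I would reduce to a particularly nice cocycle representative. The restriction map on cochains
\[
	\hom_\bbZ(C_\ast(M),\bbZ)\to \hom_\bbZ(C_\ast^\tau(M),\bbZ)
\]
is surjective, since $C_\ast^\tau(M)$ is a free $\bbZ$-submodule of $C_\ast(M)$ with a preferred basis (certain faces of the $\sigma_i$); any cochain on $C_\ast^\tau(M)$ extends by zero on the remaining singular simplices. Consequently, if $[\alpha]$ restricts to zero in $H^j(C_\ast^\tau(M);\bbZ)$, then $\alpha|_{C_\ast^\tau(M)}=\delta\beta$ for some cochain $\beta$ on $C_{j-1}^\tau(M)$; extending $\beta$ to a cochain $\tilde\beta$ on $C_{j-1}(M)$ (by zero on the other simplices), the cocycle $\alpha-\delta\tilde\beta$ is cohomologous to $\alpha$ and vanishes identically on $C_\ast^\tau(M)$. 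So I may assume from the outset that $\alpha$ itself vanishes on $C_\ast^\tau(M)$.

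Next I would invoke the standard singular cap product formula: for $\alpha\in C^j(M;\bbZ)$ and a singular $n$-simplex $\sigma$,
\[
	\alpha\cap\sigma=\alpha\bigl(\sigma|_{[v_0,\ldots,v_j]}\bigr)\cdot \sigma|_{[v_j,\ldots,v_n]}.
\]
By definition of $C_\ast^\tau(M)$, each front $j$-face $\sigma_i|_{[v_0,\ldots,v_j]}$ appearing in $\tau$ is a generator of $C_j^\tau(M)$. Since $\alpha$ vanishes on $C_\ast^\tau(M)$, we obtain $\alpha\cap\tau=\sum_i a_i\,\alpha\bigl(\sigma_i|_{[v_0,\ldots,v_j]}\bigr)\,\sigma_i|_{[v_j,\ldots,v_n]}=0$ already at the chain level. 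Hence $[\alpha]\cap[M]=[\alpha\cap\tau]=0$, and Poincar\'e duality gives $[\alpha]=0$.

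The lemma is therefore a purely homological consequence of Poincar\'e duality combined with the combinatorics of the singular cap product; no geometry intervenes. The only subtle point is the reduction to a cocycle representative that literally (not merely up to coboundary) vanishes on $C_\ast^\tau(M)$, which is handled by the evident splitting of the restriction map on cochain groups. This step has to be carried out with care because otherwise one only gets that $\alpha\cap\tau$ is a boundary modulo a contribution supported outside $C_\ast^\tau(M)$, which is not immediately enough to conclude.
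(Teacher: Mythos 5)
Your proposal is correct and follows essentially the same route as the paper: reduce (by subtracting a coboundary, using that restriction of cochains to the subcomplex $C_\ast^\tau(M)$ is split surjective) to a cocycle vanishing on $C_\ast^\tau(M)$, observe via the explicit front-face/back-face cap product formula that capping such a cocycle with $\tau$ vanishes on the chain level, and conclude by Poincar\'e duality. The only difference is that you spell out the extension-by-zero argument that the paper compresses into ``upon subtracting a coboundary''.
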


\begin{proof}
	Let $\tau=\sum_{i=1}^k a_i\cdot \sigma_i$. The chain homomorphism 
	\begin{gather*}
    	\_\cap\tau: \hom_{\bbZ}(C_{n-\ast}(M), \bbZ)\to C_\ast(M)\\
          \hom_{\bbZ}(C_{n-l}( M),\bbZ)\ni\phi\mapsto \sum_{i=1}^k a_i \cdot\phi(\sigma_i\lfloor_{n-l})\otimes\sigma_i\rfloor_{l}
    \end{gather*}
    given by the cap product 
	with $\tau$ is 
	a homology isomorphism by Poincar{\'e} duality. 
    Let $\psi$ be a singular cocycle 
	such that $[\psi]$ is in the kernel of the cohomology homomorphism induced by the restriction 
	homomorphism. Upon subtracting 
	a coboundary, we may assume that $\psi$ 
	vanishes on $C_j^\tau(M)$. By the formula above, $[\psi\cap\tau]=0$. Hence 
	$[\psi]=0$, so injectivity follows. 
\end{proof}

\begin{proof}[Proof of Theorem~\ref{thm: estimate torsion homology from integral simplicial volume}]
	Let 
	$\tau=\sum_{i=1}^k a_i\cdot \sigma_i$ with $\sigma_i\ne\sigma_j$ for $i\ne j$  be an integral singular cycle representing 
	the fundamental class of~$M$. 
	The set of $j$-dimensional faces  
	of the singular simplices $\sigma_1,\ldots, \sigma_k$ 
	is a $\bbZ$-basis 
	of $C_j^\tau(M)$. 
	This turns $C_\ast^\tau(M)$ into a based 
	$\bbZ$-chain complex. 
	
    For every based finitely generated free $\bbZ$-module we consider the Hilbert 
	space structure on its complexification for which the $\bbZ$-basis becomes a 
	Hilbert basis. 
    The norm of the image of every basis element under the differential 
	    $\bbC\otimes\partial_{j}: \bbC\otimes_\bbZ C_{j}^\tau(M)\to \bbC\otimes_\bbZ C_{j-1}^\tau(M)$ is at most $(j+1)$. 
    Since an $n$-simplex has $\binom{n+1}{j}$-many $(j-1)$-dimensional 
    faces, we have 
    \[
    	\rank_\bbZ\bigl(H_{j-1}(C_\ast^\tau(M))\bigr)
    	\le\rank_\bbZ\bigl( C_{j-1}^\tau( M)\bigr)\le \binom{n+1}{j}\cdot k.
    \]
   By Lemma~\ref{lem: soule} we get that 
    \begin{equation*}
          \log\bigl| \tors \bigl(H_{j-1}(C_\ast^\tau(M)))\bigr| \le 
                             \log\bigl| \tors \bigl(\coker(\partial_{j})\bigr)\bigr|   	
                          \le \log(j+1)\binom{n+1}{j}\cdot k.
    \end{equation*}    
	The universal coefficient theorem implies 
	\[
		\log\bigl|\tors H^j\bigl (\hom_{\bbZ}(C_\ast^\tau(M),\bbZ)\bigr )\bigr|\le \log(j+1)\binom{n+1}{j}\cdot k.
	\]
   By Lemma~\ref{lem: poincare} 
    $H^j(M;\bbZ)$ embeds into $H^j(\hom_\bbZ(C_\ast^\tau(M),\bbZ))$, and 
	this yields an embedding of the corresponding torsion subgroups. 
	By Poincar{\'e} duality we conclude that
    \[
        \log\bigl(\tors\bigl(H_{j}(M;\bbZ)\bigr)\bigr)=    	
\log\bigl(\tors\bigl(H^{n-j}(M;\bbZ)\bigr)\bigr)\le \log(n+1)\cdot \binom{n+1}{j+1}\cdot k. \qedhere
    \]
\end{proof}

\subsection{The rank estimate} 
\label{sub:the_rank_estimate}

The corresponding result for Betti numbers is stated 
in~\citelist{\cite{gromov-book}*{p.~301 and p.~307} \cite{lueck}*{Example~14.28}} with a 
different constant. For convenience we provide a quick proof.

\begin{theorem}\label{thm: estimate rank homology from integral simplicial volume}
	Let $M$ be a closed $n$-dimensional oriented manifold. 
	If the fundamental class of $M$ is represented by an integral cycle 
	with $k$ singular $n$-simplices, then 
	\begin{equation*}
		\dim_{\bbF_p} \bigl(H_j(M;\bbF_p)\bigr)\le \binom{n+1}{j}\cdot k. 
	\end{equation*}
	for every $j\ge 0$. 
\end{theorem}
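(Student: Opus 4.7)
The plan is to mirror the proof of Theorem~\ref{thm: estimate torsion homology from integral simplicial volume}, but with a shortcut: over a field there is no torsion to control, so I can dispense with the Soul{\'e}--Gabber lemma and just compare dimensions with the subcomplex $C_\ast^\tau(M)$ directly. The key ingredient will be an $\bbF_p$-analog of Lemma~\ref{lem: poincare}.

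First I would record the $\bbF_p$ version of Lemma~\ref{lem: poincare}: let $\bar\tau$ be the mod-$p$ reduction of $\tau$, which represents the fundamental class $[M]\in H_n(M;\bbF_p)$ since $M$ is oriented. Cap product with $\bar\tau$ defines a chain map
\[
\_\cap\bar\tau\colon\hom_{\bbF_p}\bigl(C_{n-\ast}(M;\bbF_p),\bbF_p\bigr)\to C_\ast(M;\bbF_p),
\]
which is a quasi-isomorphism by Poincar{\'e} duality over $\bbF_p$. Exactly the argument of Lemma~\ref{lem: poincare} then shows that the restriction
\[
\hom_{\bbF_p}\bigl(C_\ast(M;\bbF_p),\bbF_p\bigr)\to\hom_{\bbF_p}\bigl(C_\ast^\tau(M)\otimes\bbF_p,\bbF_p\bigr)
\]
induces injections in all cohomological degrees: if a cocycle $\psi$ becomes a coboundary after restriction, I can subtract a coboundary so that $\psi$ vanishes on $C_\ast^\tau(M)\otimes\bbF_p$, then observe that $\psi\cap\bar\tau=0$, hence $[\psi]=0$.

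Next, since $\dim_{\bbF_p}H^{n-j}$ of any cochain complex is bounded by the $\bbF_p$-dimension of its degree-$(n-j)$ cochain module, the injection above yields
\[
\dim_{\bbF_p}H^{n-j}(M;\bbF_p)\le\dim_{\bbF_p}\hom_{\bbF_p}\bigl(C_{n-j}^\tau(M)\otimes\bbF_p,\bbF_p\bigr)=\rank_\bbZ C_{n-j}^\tau(M).
\]
By construction $C_{n-j}^\tau(M)$ is generated by the $(n-j)$-dimensional faces of the $k$ simplices $\sigma_1,\ldots,\sigma_k$, and each $n$-simplex has $\binom{n+1}{n-j+1}=\binom{n+1}{j}$ such faces, so $\rank_\bbZ C_{n-j}^\tau(M)\le\binom{n+1}{j}\cdot k$. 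Finally, Poincar{\'e} duality over $\bbF_p$ gives $\dim_{\bbF_p}H_j(M;\bbF_p)=\dim_{\bbF_p}H^{n-j}(M;\bbF_p)$, which finishes the proof.

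No step is really an obstacle here: the only thing to check carefully is the $\bbF_p$-analog of Lemma~\ref{lem: poincare}, and that is an essentially word-for-word adaptation once one reduces $\tau$ modulo $p$ and invokes Poincar{\'e} duality for oriented manifolds with field coefficients.
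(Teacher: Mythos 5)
Your proposal is correct and follows essentially the same route as the paper: the $\bbF_p$-analog of Lemma~\ref{lem: poincare} (injectivity of restriction to the cochains of $C_\ast^\tau(M)$ via cap product with the mod-$p$ fundamental class), a dimension count of the $(n-j)$-dimensional faces giving $\binom{n+1}{j}\cdot k$, and Poincar\'e duality over $\bbF_p$. If anything, your version spells out cleanly with $\bbF_p$-coefficients what the paper's terse proof states with some notational slips between $\bbZ$ and $\bbF_p$.
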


\begin{proof}
	    We retain the notation from the previous subsection. The estimate 
	    \[
	    	\dim_{\bbF_p}\bigl(H^j\bigl(\hom_{\bbZ}(C_\ast^\tau(M), \bbZ)\bigr)\bigr)\le \binom{n+1}{j+1}\cdot k
	    \]
		holds 
		since $\dim_{\bbF_p}\bigl(\hom_{\bbF_p}(C_\ast^\tau(M), \bbZ)\bigr)$ 
		satisfies the same upper bound. 
		The $\bbF_p$-analog of Lemma~\ref{lem: poincare} holds true 
		by the same proof. 
		Hence $H^j(M;\bbZ)$ injects into 
		$H^j\bigl(\hom_{\bbZ}(C_\ast^\tau(M), \bbZ)\bigr)$ and so 
		\[
			\dim_{\bbF_p}\bigl(H^j(M;\bbZ)\bigr)\le \binom{n+1}{j+1}\cdot k. 
		\]
		The statement now follows from Poincar{\'e} duality. 
\end{proof}

\section{Proof of Theorem~\ref{thm: main thm volume}} 
\label{sec:volume}

The proof is based on the following result, which is implicitly contained 
in Guth's paper~\cite{guth}; a weaker version, where one replaces the bound on the 
volume of $1$-balls by the stronger assumption $\ricci(M)\ge -1$, can be extracted 
from Gromov's paper~\cite{gromov}. 

\begin{theorem}\label{thm: variation on guth}
	For every $n\in \bbN$ and $V_0>0$ there is a constant 
	$C(n,V_0)>0$ with the following property: 
	
	If $M$ is an $n$-dimensional  connected oriented  
		closed aspherical Riemannian manifold such that 
		every $1$-ball of $M$ has volume at most $V_0$ 
		and the systole of $M$ is at least~$1$, then there is 
		an integral cycle that represents the fundamental class of $M$ and 
		has at most $C(n,V_0)$ singular $n$-simplices. 
\end{theorem}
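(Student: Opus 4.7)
The plan is to adapt the covering-and-nerve method of Guth~\cite{guth}. The hypothesis $\sys(M)\ge 1$ ensures that every metric ball of radius at most $1/2$ in~$M$ is isometric to a ball in~$\widetilde{M}$ and therefore contractible in~$M$; the hypothesis on the volume of $1$-balls is the ingredient that controls the combinatorics of an efficient cover. The target is to produce an open cover whose nerve has only $C(n,V_0)$ top-dimensional simplices, and then push the fundamental class through this nerve back to~$M$.

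First, I would construct a finite open cover $\calU=\{U_1,\ldots,U_N\}$ of $M$ with the following three properties: each $U_\alpha$ has diameter at most $1$ and is contractible in~$M$; the multiplicity of $\calU$ is at most $n+1$; and $N\le C(n,V_0)$. A natural starting point is a maximal $r$-separated subset $\{x_\alpha\}$ of $M$ for some $r=r(n,V_0)<1/2$, thickened to $U_\alpha=B(x_\alpha,1/2)$. Because balls of radius $\le 1/2$ are contractible in $M$, (a) is automatic; the volume bound $\vol(B(x,1))\le V_0$ combined with a packing argument gives local control on how many $U_\alpha$ can meet a given point, which is what makes both (b) and (c) possible. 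The reduction of the multiplicity to exactly $n+1$ and the cardinality bound to a function of $n$ and $V_0$ alone is the delicate step and is the content of Guth's construction.

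Second, let $\calN$ be the nerve of $\calU$. The multiplicity bound yields $\dim\calN\le n$, and the total number of top-dimensional simplices of $\calN$ is at most $\binom{N}{n+1}\le C'(n,V_0)$. A partition of unity subordinate to $\calU$ defines a continuous nerve map $f\colon M\to\calN$. Using contractibility of each $U_\alpha$ in~$M$ together with asphericity of~$M$, I would then construct $g\colon \calN\to M$ with $g\circ f\simeq\id_M$ by induction over the skeleta of~$\calN$: the vertices $\calN^{(0)}$ are sent to the points $x_\alpha$, and at each stage the extension obstruction lies in a homotopy group of~$M$, which vanishes above dimension~$1$ because $\widetilde M$ is contractible, and vanishes in dimension~$1$ because $\pi_1(U_\alpha)\to\pi_1(M)$ is trivial by contractibility in~$M$.

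Finally, $f_*[M]\in H_n(\calN;\bbZ)$ is represented by an integral linear combination of the at most $C'(n,V_0)$ top-dimensional simplices of $\calN$, and $g_*(f_*[M])=(g\circ f)_*[M]=[M]$ is then an integral singular cycle with at most $C(n,V_0)$ singular $n$-simplices. The principal obstacle is the first step: simultaneously achieving multiplicity~$\le n+1$ and cardinality bounded purely in terms of $n$ and~$V_0$ is exactly where Guth's refined covering technique enters, and a direct $\epsilon$-net argument does not suffice.
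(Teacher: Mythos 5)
Your plan hinges entirely on Step 1: a cover of $M$ by diameter-$\le 1$ sets, contractible in $M$, with multiplicity $\le n+1$ and cardinality $\le C(n,V_0)$, and you defer exactly this to ``Guth's construction''. That is where the argument breaks. First, the cardinality claim cannot hold: each of your sets $B(x_\alpha,1/2)$ is contained in a $1$-ball and so has volume at most $V_0$, hence any such cover needs at least $\vol(M)/V_0$ members. Correspondingly, the bound in the theorem has to be read as $C(n,V_0)\vol(M)$ singular $n$-simplices — this is what the paper's own proof sketch produces and what the deduction of Theorem~\ref{thm: main thm volume} uses — and the volume-free count you aim at is in fact false: closed hyperbolic surfaces of large genus with systole $\ge 1$ have uniformly bounded $1$-ball volume but unbounded $b_1$, contradicting it via Theorem~\ref{thm: estimate rank homology from integral simplicial volume}. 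Second, and more importantly, the cited paper of Guth does not provide a multiplicity-$(n+1)$ cover by contractible-in-$M$ sets, and no such statement is available under these hypotheses. What Guth actually does, and what the paper's proof quotes, is different in mechanism: he takes a good cover by balls of \emph{varying} radii and its \emph{rectangular nerve}, whose dimension is in general larger than $n$; the control on $n$-simplices does not come from the nerve being $n$-dimensional, but from the estimate that the $n$-volume of the image $f(M)$ of the nerve map is at most $C(n)\vol(M)$ (Guth's Lemma~5), followed by a Federer--Fleming-style deformation of $f_*[M]$ into the $n$-skeleton yielding a cycle with at most $C'(n,V_0)\vol(M)$ top-dimensional cells (proof of Guth's Lemma~9); asphericity enters only at the end, to provide a left homotopy inverse of $f$ (Lemma~7). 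Your proposal replaces precisely this mechanism by an unproved covering assertion, so the essential content of the theorem is missing.

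Two further points. The packing argument you invoke does not run on the stated hypotheses: an upper bound on $1$-ball volumes gives no control on the number of $r$-separated points near a given point unless one also has a lower bound on volumes of small balls; such a bound, $\vol\bigl(B(x,\rho)\bigr)\ge c(n)\rho^n$ for $\rho\le 1/2$, does hold here, but only via Gromov's local volume inequality for essential manifolds with systole $\ge 1$, and even then it bounds the multiplicity only by some $C(n,V_0)$, never by $n+1$; with that weaker multiplicity one must also replace your global count $\binom{N}{n+1}$ (which becomes of order $\vol(M)^{n+1}$ once $N\sim\vol(M)$) by a local intersection bound to get a count linear in $\vol(M)$. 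Finally, in Step 3 the existence of $g$ with $g\circ f\simeq\id_M$ needs more than extendability of $g$ over the skeleta of the nerve: the homotopy $g\circ f\simeq\id$ must itself be constructed, which is most cleanly done $\Gamma$-equivariantly in $\widetilde M$, as in Guth's Lemma~7 or in Section~\ref{sec:amenable_covers} of the paper. This last point is repairable; the covering step is the genuine gap.
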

The 
following result about the simplicial volume, that is, 
about the complexity of real-valued cycles representing the fundamental class, 
is explicitly contained in Guth's work. 

\begin{theorem}[Guth]
	For every $n\in \bbN$ and $V_0>0$ there is a constant 
	$C(n,V_0)>0$ with the following property: 
	
	If $M$ is an $n$-dimensional  connected oriented  
		closed aspherical Riemannian manifold such that 
		every $1$-ball of $M$ has volume at most $V_0$ 
		and the systole of $M$ is at least~$1$, then 
		the simplicial volume is at most $C(n,V_0)\vol(M)$. 
\end{theorem}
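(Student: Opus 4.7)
The plan is to combine Guth's construction of a linear-size cover of $M$ by balls that are contractible in~$M$ with Gromov's bound for simplicial volume in terms of amenable covers.

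First I would invoke the central construction of~\cite{guth} to produce an open cover $\mathcal U=\{B(x_i,r_i)\}_{i=1}^k$ of~$M$ with the following properties: each radius $r_i\le 1/3$; the multiplicity of $\mathcal U$ is at most a dimensional constant $N(n)$; and $k\le C(n,V_0)\vol(M)$. Guth's argument assigns to each point $x$ a ``good radius'' $r(x)\le 1/3$ by a coarea-type estimate on the distance function from~$x$, using the upper bound $V_0$ on $\vol B(x,1)$ to locate a radius at which the ball carries a definite proportion of volume relative to~$r(x)^n$. A Vitali-style selection from the family $\{B(x,r(x))\}_{x\in M}$ then produces the global cover, and disjointness of suitably shrunk balls together with the resulting uniform lower volume bounds yields the linear cardinality estimate.

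Next I would use the systolic hypothesis to establish amenability of the cover. Since $\sys(M)\ge 1$ and each $r_i\le 1/3$, any two points of $B(x_i,r_i)$ with a common lift to the universal cover $\widetilde M$ would have preimages at distance strictly less than $2r_i<1\le\sys(M)$ in $\widetilde M$, forcing the corresponding deck transformation to be trivial. Therefore $B(x_i,r_i)$ lifts homeomorphically into the simply connected space $\widetilde M$, and the inclusion $B(x_i,r_i)\hookrightarrow M$ induces the zero map on $\pi_1$. Consequently every member of $\mathcal U$, and every nonempty finite intersection, has trivial (in particular amenable) image in $\pi_1(M)$, so $\mathcal U$ is an amenable cover.

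Finally I would apply Gromov's amenable-cover theorem for simplicial volume~\cite{gromov}: for a closed oriented $n$-manifold admitting an open amenable cover of multiplicity at most~$N$, the nerve map induces an isometric isomorphism in top-degree bounded cohomology, yielding a bound $\norm{M}\le c(n,N)\cdot\#\{n\text{-simplices in the nerve}\}$. The number of $n$-simplices of the nerve of $\mathcal U$ is at most $\binom{N(n)}{n+1}k$, and combining with the first step gives $\norm{M}\le C'(n,V_0)\vol(M)$. The main obstacle is the first step: producing a cover whose cardinality is \emph{linear} in $\vol(M)$ is the essential new content of Guth's work and is what separates this statement from earlier bounds available only under Ricci curvature hypotheses. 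Once Guth's cover is granted, the remaining two steps are routine consequences of the systole assumption and of standard bounded-cohomology machinery.
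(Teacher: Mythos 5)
Your step 2 is fine: with $\sys(M)\ge 1$ and radii $\le 1/3$, each ball lifts to $\widetilde M$ and its inclusion into $M$ is $\pi_1$-trivial; this is indeed the reason Guth's Lemma~7 applies. But the paper's proof of this statement is simply a citation --- combine Lemmas~7 and~9 of Guth --- and the mechanism there is quite different from yours: Guth maps $M$ to the \emph{rectangular} nerve of a good cover, bounds the Riemannian $n$-volume of the image $f(M)$ by $C(n)\vol(M)$ (his Lemma~5), deforms the image of the fundamental cycle into the $n$-skeleton by a Federer--Fleming argument with controlled $\ell^1$-norm (proof of Lemma~9), and uses asphericity together with the $\pi_1$-triviality of the balls to get a left homotopy inverse (Lemma~7).

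The genuine gap is in your step 1 and in the count your step 3 depends on. Under the sole hypothesis that every $1$-ball has volume at most $V_0$ --- with no curvature or doubling assumption --- there is no Vitali/Besicovitch-type argument that produces a cover by balls with multiplicity bounded by a purely dimensional constant $N(n)$: such packing bounds require volume comparison (e.g.\ a lower Ricci bound) or a doubling property, and Guth neither proves nor uses such a bound; his good covers may have large multiplicity and contain overlapping balls of wildly different radii, which is precisely why he introduces the rectangular nerve instead of the simplicial nerve. Moreover, even if one grants multiplicity $\le N$, your estimate that the nerve has at most $\binom{N(n)}{n+1}k$ top-dimensional simplices is false in general: a family of $k$ pairwise-crossing thin sets in the plane has multiplicity $2$ but $\binom{k}{2}$ nerve edges, so bounded multiplicity does not give a simplex count linear in $k$. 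Hence the quantity you feed into the final step is not controlled by $C(n,V_0)\vol(M)$; the combinatorial control you assume is exactly what is unavailable here, and Guth's volume bound on $f(M)$ plus the Federer--Fleming pushdown is the substitute for it. A secondary issue: the quantitative ``amenable-cover theorem'' you invoke (a bound of $\norm{M}$ by the number of $n$-simplices of the nerve of an amenable cover whose multiplicity may exceed $n$) is not Gromov's vanishing theorem and would itself require proof; in the present situation it is also unnecessary, since the pieces are $\pi_1$-trivial and $M$ is aspherical, so the ordinary nerve map already admits a left homotopy inverse and one can compare fundamental cycles directly, which is exactly the route through Guth's Lemma~7 that the paper cites.
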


\begin{proof}
	Just combine Lemmas~7 and~9 in~\cite{guth}. 
\end{proof}

Guth's proof of the above theorem also implies Theorem~\ref{thm: variation on guth} as we explain now. Guth considers a good cover of~$M$ in the 
sense of Gromov~\cite{guth}*{Section~1} and introduces a modification of the usual nerve construction of a cover, which 
is particularly adapted to covers by balls with varying radii, 
called the \emph{rectangular nerve}~\cite{guth}*{Section~3}. He shows that 
for the map $f$ from $M$ to the 
rectangular nerve associated to a suitable partition of unity 
the $n$-volume of $f(M)$ is bounded by $C(n)\vol(M)$~\cite{guth}*{Lemma~5} for some universal 
constant $C(n)>0$.  He concludes  
that the image of the fundamental class is homologous to a cycle in the 
$n$-skeleton whose number of singular $n$-simplices is bounded 
by $C'(n)\vol(M)$ for some universal constant $C'(n)>0$~\cite{guth}*{Proof of Lemma~9}. By asphericity 
the map $f$ has a left homotopy inverse~\cite{guth}*{Lemma~7}, and this implies Theorem~\ref{thm: variation on guth}. 

Finally, we deduce Theorem~\ref{thm: main thm volume} from Theorem~\ref{thm: variation on guth}. 

\begin{proof}[Proof of Theorem~\ref{thm: main thm volume}]
	Let $M$ satisfy the assumptions in Theorem~\ref{thm: main thm volume}. 
	According to Subsection~\ref{sub: reduction} we may assume that $M$ is oriented. 
	Theorem~\ref{thm: main thm volume}  is now a consequence of 
	Theorem~\ref{thm: variation on guth} and Theorem~\ref{thm: estimate torsion homology from integral simplicial volume} (for the statement about torsion) 
	or Theorem~\ref{thm: estimate rank homology from integral simplicial volume} 
	(for the statement about $\bbF_p$-dimension), respectively. 
\end{proof}

\section{Proof of Theorem~\ref{thm: amenable covers}}
\label{sec:amenable_covers}

\subsection{Setup} 
\label{sub:setup}

Throughout Section~\ref{sec:amenable_covers} we shall consider 
a connected closed $n$-dimensional oriented aspherical manifold~$M$ and 
adhere to the following notation. 
\begin{enumerate}
    \item $\calU$ denotes a cover of $M$ of multiplicity $\le n$ by open,
    amenable subsets. 
  \item $(\Gamma_i)_{i\in\bbN}$ is a residual chain of the fundamental
    group $\Gamma:=\pi_1(M)$.
  \item Let $(X,\mu)$ be the profinite topological group in~\eqref{eq: projective limit} 
  with the normalized Haar measure $\mu$. Let $\pi_i: X\to \Gamma/\Gamma_i$ be the 
  canonical projection. 
\end{enumerate}
  
The sets 
$\pi_i^{-1}({\gamma \Gamma_i})$ with $i\in\bbN$ and $\gamma\in\Gamma$ 
define a basis $\calO$ of the 
topology of $X$. A subset of $X$ 
is called \emph{cylindrical} if it is a finite union of 
elements in $\calO$. For every cylindrical set $A$ there is a smallest number $i\in\bbN$ such that $A$ is 
the $\pi_i$-preimage of a subset of $\Gamma/\Gamma_i$. We call this number the \emph{level} of $A$ and denote it 
by $l(A)$. Note that for every $i\ge l(A)$ 
	\begin{equation}\label{eq: level equality}
		\pi_i^{-1}\bigl(\pi_i(A)\bigr)=A.
	\end{equation}
We will not use the topological group structure on $X$ and only regard $X$ as 
a $\Gamma$-space with a $\Gamma$-invariant probability measure~$\mu$.

\subsection{Constructing measurable covers}
\label{sub:measurable covers}

The topological space $X\times\widetilde M$ is endowed with the
diagonal $\Gamma$-action. We say that a $\Gamma$-invariant collection $\calV$ of 
subsets of $X\times\widetilde M$ is 
a \emph{$\Gamma$-equivariant measurable cover} if the following 
conditions are satisfied. 
\begin{enumerate}
	\item An element of $\calV$ is a product of a measurable subset of $X$ and 
	an open subset of $\widetilde M$. 
	\item The union of elements of $\calV$ is $X\times \widetilde M$. 
	\item The $\Gamma$-set $\calV$ has only finitely many $\Gamma$-orbits. 
\end{enumerate}

We construct such covers by appealing to the generalized Rokhlin lemma
by Ornstein-Weiss which we recall first. 

A \emph{monotile} $T$ in a
group $\Lambda$ is a finite subset for which there is a subset $C$ such that 
$\{T\cdot c\mid c\in C\}$ is a partition of $\Lambda$. For finite
subsets $F, T\subset \Lambda$ we say that $T$ is \emph{$(F,
  \delta)$-invariant}, if the \emph{$F$-boundary} $\partial
T=\{\lambda\in T\mid \exists_{\gamma\in F}~ \gamma\lambda\not\in T\}$
satisfies 
\[                  \abs{\partial T}/\abs{T}<\delta. \]
Weiss showed that residually finite amenable groups possess
$(F,\delta)$-invariant monotiles for arbitrarily small~$\delta>0$~\cite{weiss}. The relevance of monotiles stems from the
following result~\cite{ornstein+weiss}.

\begin{theorem}[Ornstein-Weiss]\label{thm: ornstein-weiss}
	Let $T$ be a monotile in an amenable group $\Lambda$. Let $\Lambda$ act freely and $\nu$-preservingly 
	on a standard probability space $(Y,\nu)$. For every $\epsilon>0$ there is a measurable subset 
	$A\subset Y$ satisfying: 
	\begin{enumerate}
		\item For $\lambda,\lambda'\in T$ with $\lambda\ne\lambda'$ the sets $\lambda A$ and 
	$\lambda' A$ are disjoint. 
		\item $\nu\bigl(\bigcup_{\lambda\in T}\lambda A\bigr)>1-\epsilon$.
	\end{enumerate}
\end{theorem}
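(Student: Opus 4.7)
The plan is to prove this Rokhlin-type lemma by a maximality argument that exploits the tiling structure. I would first reformulate the two conditions: condition (1) amounts to the packing condition $A\cap sA=\emptyset$ for all $s\in S:=T^{-1}T\setminus\{e\}$, while condition (2), using $\nu$-invariance together with the disjointness, is equivalent to $|T|\cdot\nu(A)>1-\epsilon$. So the task reduces to finding a measurable $S$-packing of measure strictly greater than $(1-\epsilon)/|T|$. The collection $\mathcal{F}$ of measurable $S$-packings is closed under countable increasing unions, and every element has measure at most $1/|T|$; a standard exhaustion (pick a sequence $A_n\in\mathcal{F}$ whose measures approach the supremum and pass to the union) produces a maximal $A^*\in\mathcal{F}$.

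The heart of the argument is then to show that maximality, combined with the monotile property, forces $|T|\cdot\nu(A^*)>1-\epsilon$. Suppose to the contrary that $\nu(Z)\ge\epsilon$ where $Z=Y\setminus TA^*$. To contradict maximality I would construct a non-null Borel $B\subset Z$ satisfying $B\cap s(A^*\cup B)=\emptyset$ for all $s\in S$; then $A^*\cup B$ would be a strictly larger packing. The monotile decomposition $\Lambda=\sqcup_{c\in C}Tc$ induces, orbit by orbit, a partition into $T$-tiles, each carrying a canonical ``base'' (the element indexed by $c$ in the tile $Tc$); a Borel selector picking the base of the tile containing each $y\in Z$ would then produce the desired $B$.

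The main obstacle is the measurability of such a selector: free ergodic actions of infinite groups admit no Borel cross-section for the orbit equivalence relation, so a literal tile-by-tile selection is impossible. Here I would invoke the quasi-tiling apparatus of Ornstein-Weiss: amenable groups admit measurable quasi-tilings by F\o lner sets, and since $T$ is an exact monotile, the quasi-tiling can be promoted to a genuine tiling of orbits outside a set of arbitrarily small $\nu$-measure. Transporting this structure to $Y$ via the free measure-preserving action yields the required $B$, contradicts maximality, and completes the proof.
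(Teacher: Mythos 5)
Your overall strategy --- take a maximal $S$-packing for $S=T^{-1}T\setminus\{e\}$ and derive a contradiction from $\nu(Y\setminus TA^*)\ge\epsilon$ --- cannot work, and the specific contradiction step fails. If $A^*$ is a packing to which no set of positive measure can be added, all you can conclude (using freeness and finiteness of $S$) is that $A^*\cup SA^*=T^{-1}TA^*$ has full measure; this gives $\nu(A^*)\ge |T^{-1}T|^{-1}$ and hence only $\nu(TA^*)\ge 1/|T|$, nowhere near $1-\epsilon$. In particular $Z=Y\setminus TA^*$ may have large measure while being contained in $T^{-1}TA^*$ up to null sets --- maximality in fact forces exactly this --- and then \emph{no} non-null $B\subset Z$ can satisfy $B\cap sA^*=\emptyset$ for all $s\in S$, which is one of the conditions needed for $A^*\cup B$ to be a packing: being disjoint from the $T$-translates of $A^*$ is strictly weaker than being disjoint from the $T^{-1}T$-translates. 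So the contradiction you aim for is simply not available; note also that up to this point your argument never uses that $T$ is a monotile, so it could not possibly yield the theorem. (A minor further point: the union of a sequence of packings whose measures approach the supremum need not be a packing; one needs the usual saturation/exhaustion construction instead.)

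The real content of the theorem is precisely a measurable, equivariant selection of a $T$-tiling of almost every orbit, and your proposal delegates exactly this to ``the quasi-tiling apparatus of Ornstein--Weiss'' together with an unproved ``promotion'' step. That is circular, or at best replaces the proof by a citation of a harder statement plus a missing reduction: the Ornstein--Weiss quasi-tilings use several highly invariant F{\o}lner shapes (whereas the monotile $T$ here is a fixed finite set with no invariance assumption) and are only $\epsilon$-disjoint, so converting them into exact disjointness of the translates $\lambda A$, $\lambda\in T$, is itself a nontrivial argument that you do not supply. A proof that genuinely uses the monotile hypothesis must bring it in globally; for instance, one can put a shift-invariant probability measure (which exists by amenability) on the compact space of $T$-tilings of $\Lambda$ and pass to the product with $(Y,\nu)$, where a perfect tower base appears, the remaining delicate step being to transfer it from that extension back to $Y$ --- and this is where the actual work of Ornstein--Weiss lies. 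For what it is worth, the paper does not prove this statement either: it quotes it from Ornstein--Weiss, so there is no internal proof to compare with; judged on its own, your argument has a genuine gap at its central step.
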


We may assume that the elements $U_i\subset M$ of $\calU=\{U_1,\ldots,U_m\}$ are
connected. 
By taking the connected components we possibly increase $m$ but we do not increase the 
multiplicity. Define 
\[
	\Lambda_i:= \im\bigl(\pi_1(U_i)\to \pi_1(M)\bigr)\subset \Gamma.
\]
This involves the choice of a base point in each $U_i$ and paths from them to the base point of $M$ which is not relevant for our discussion. By assumption, each $\Lambda_i$ 
is amenable. Since $\Gamma$ is residually finite, each $\Lambda_i$ is
residually finite. 
Let $\bar U_i$ be the regular covering of $U_i$ associated to the 
kernel of the homomorphism $\pi_1(U_i)\to \pi_1(M)$. Let $\pr: \widetilde M\to M$ 
denote the universal covering projection. 
The group $\Lambda_i$ acts on $\bar U_i$ by deck transformations. 
By covering theory we may and will choose a lift of the map $\bar U_i\to U_i\to M$ to $\widetilde M$. This lift 
yields a homeomorphism 
\[
	\Gamma\times_{\Lambda_i} \bar U_i\cong \pr^{-1}(U_i)
\]
of coverings of $U_i$ which allows us to regard $\bar U_i$ as a subset of $\widetilde M$. 
We choose an open subset $K_i\subset \bar U_i$ which is relatively compact in~$\widetilde M$ such 
that $\bar U_i=\Lambda_i\cdot K_i$ for every $i\in\{1,\ldots,m\}$. 
Properness of the $\Gamma$-action allows us to fix a finite subset $F\subset\Gamma$ with the property   
\begin{equation}\label{eq: choice of F}
	\gamma K_r\cap K_s\ne \emptyset\Rightarrow\gamma\in F 
\end{equation}
for all $r,s\in\{1,\ldots, m\}$. 
Our following construction of $\Gamma$-equivariant measurable covers depends on
a parameter $\delta>0$ (and on many other choices but they are
less relevant). 
Later in the proof we consider a sequence of 
$\Gamma$-equivariant measurable covers for $\delta\to 0$. \medskip\\
Let us fix a parameter $\delta>0$. 
For every $i\in\{1,\ldots, m\}$ we choose a 
monotile $T_i\subset \Lambda_i$ of the residually finite amenable group $\Lambda_i$ that is 
\emph{$(F\cap\Lambda_i,\delta)$-invariant}. Since each
$T_i^{-1}K_i\cup K_i$ is relatively compact, there is a finite subset
$E\subset\Gamma$ with the property 
\begin{equation}\label{eq: the set E}
     \bigl(T_r^{-1}K_r\cup K_r\bigr)\cap\gamma\bigl(T_s^{-1}K_s\cup
     K_s)\ne\emptyset\Rightarrow \gamma\in E
\end{equation}
for all $r,s\in\{1,\ldots, m\}$. Set 
\begin{equation}\label{eq: choice of epsilon}
	\epsilon:= 2^{-(n+1)}\abs{E}^{-(n+1)}m^{-1}\delta. 
\end{equation}
By Theorem~\ref{thm: ornstein-weiss} there are 
measurable subsets $A_i\subset X$ such that, for fixed $i$, the sets $\lambda A_i$, $\lambda\in T_i$, 
are disjoint and $R_i=X\bs T_iA_i$ has $\mu$-measure at most~$\epsilon$. 
\begin{defn}\label{def: measurable cover}
	\begin{align*}
        \calW_\delta &:=\bigl\{\gamma A_i\times \gamma T_i^{-1}K_i\mid i\in\{1,\ldots, m\},\gamma\in\Gamma\bigr\}\\
		\calV_\delta &:=\bigl\{\gamma A_i\times \gamma
		        T_i^{-1}K_i\mid i\in\{1,\ldots,m\}, \gamma\in \Gamma\bigr\}\cup
		        \bigl\{\gamma R_i \times\gamma K_i\mid i\in\{1,\ldots,m\},
		        \gamma\in \Gamma\bigr\}\\
				\calW^0_\delta &:=\bigl\{A_i\times T_i^{-1}K_i\mid i\in\{1,\ldots, m\}\bigr\}\\
				\calV^0_\delta &:=\bigl\{A_i\times T_i^{-1}K_i\mid i\in\{1,\ldots, m\}\bigr\}\cup \bigl\{ R_i\times K_i\mid i\in\{1,\ldots,m\}\bigr\}
	\end{align*}
\end{defn}

The subsets $\calV^0_\delta$ and $\calW^0_\delta$ are $\Gamma$-transversals 
of $\calV_\delta$ and $\calW_\delta$, respectively. 
Keeping in mind that $\Gamma K_i=\widetilde{M}$ one immediately verifies: 
\begin{lemma} $\calV_\delta$ is a $\Gamma$-equivariant
  measurable cover. 
\end{lemma}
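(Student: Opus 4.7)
The plan is to check the three defining conditions of a $\Gamma$-equivariant measurable cover. Conditions~(1) and~(3) are essentially bookkeeping; the real content is condition~(2), namely that $\calV_\delta$ actually covers $X\times\widetilde M$. First I would note that $\calV_\delta$ is manifestly $\Gamma$-invariant: acting by $\gamma'\in\Gamma$ on a set of the form $\gamma A_i\times\gamma T_i^{-1}K_i$ produces $(\gamma'\gamma) A_i\times (\gamma'\gamma)T_i^{-1}K_i$, and analogously for sets of the form $\gamma R_i\times\gamma K_i$, so both families are preserved under the $\Gamma$-action.

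For condition~(1), each first factor is measurable, being a $\Gamma$-translate of $A_i$ or of $R_i=X\smallsetminus T_iA_i$, and each second factor is open, being a $\Gamma$-translate of $K_i$ or of $T_i^{-1}K_i=\bigcup_{\lambda\in T_i}\lambda^{-1}K_i$. Condition~(3) is immediate from the definition: $\calV_\delta$ breaks into at most $2m$ $\Gamma$-orbits, one for each of the $2m$ generators listed in Definition~\ref{def: measurable cover}.

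The heart of the argument is condition~(2). Given $(x,\widetilde m)\in X\times\widetilde M$, I would first use that $\widetilde M=\bigcup_{i=1}^m\Gamma K_i$ — which follows from $M=\bigcup_i U_i$ together with $\pr^{-1}(U_i)=\Gamma\cdot\bar U_i=\Gamma\Lambda_iK_i=\Gamma K_i$ — to pick an index $i$ and an element $\gamma\in\Gamma$ with $\widetilde m\in\gamma K_i$. Since by construction $X=T_iA_i\sqcup R_i$, the point $\gamma^{-1}x$ sits in exactly one of these two pieces. If $\gamma^{-1}x\in R_i$, then $(x,\widetilde m)\in\gamma R_i\times\gamma K_i\in\calV_\delta$ and we are done. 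Otherwise $\gamma^{-1}x=\lambda a$ for some $\lambda\in T_i$ and $a\in A_i$; setting $\gamma'\defq\gamma\lambda$, we obtain $x\in\gamma' A_i$, and $\widetilde m\in\gamma K_i=\gamma'\lambda^{-1}K_i\subset\gamma'T_i^{-1}K_i$ because $\lambda^{-1}\in T_i^{-1}$, so $(x,\widetilde m)\in\gamma' A_i\times\gamma'T_i^{-1}K_i\in\calV_\delta$.

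The only minor subtlety, and the step most prone to off-by-one slips, is keeping straight the dual roles of $T_i$ on the two factors: one uses $T_i$ to translate $A_i$ so that its translates tile almost all of $X$, while simultaneously using $T_i^{-1}$ to enlarge $K_i$ on the manifold side so that the product still catches the point $\widetilde m\in\gamma K_i$ even when $\gamma^{-1}x$ lies in a far-away translate $\lambda A_i$. Beyond that, the finer data $\delta$, $\epsilon$, $E$, $F$ play no role in this lemma and will only enter later when one needs to control the multiplicity or the level of the associated cover.
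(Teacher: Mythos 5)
Your verification is correct and is exactly the direct check the paper leaves implicit (it only remarks that, keeping $\Gamma K_i=\pr^{-1}(U_i)$ and hence $\bigcup_i\Gamma K_i=\widetilde M$ in mind, the lemma is "immediately verified"). In particular your case split $X=T_iA_i\sqcup R_i$ and the passage from $\gamma^{-1}x\in\lambda A_i$ to the set $\gamma\lambda A_i\times\gamma\lambda T_i^{-1}K_i$ is precisely the intended use of the monotile translates versus the enlarged sets $T_i^{-1}K_i$.
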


\begin{remark}
	In~\cite{sauer-volume} we proved that the $\ell^2$-Betti numbers of $M$ vanish under the assumptions of 
	Theorem~\ref{thm: amenable covers}; this turns out be  
	a corollary to Theorem~\ref{thm: amenable covers} provided the fundamental group is 
	residually finite (cf.~Remark~\ref{rem: consequences l2 betti}). 
	In~\cite{sauer-volume} we use a similar construction of measurable covers 
	but we do not use the topological structure on $X$ and 
	the methods therein are   
	not suited for the consideration of mod $p$ or torsion homology. 
\end{remark}

\subsection{Managing expectations} 
\label{sub:small_average_number_of_k_}
A 
$j$-tuple whose entries lie in a set $Y$ and are pairwise distinct is called 
a \emph{$j$-configuration} in $Y$. 
Let $(s_1,\ldots,s_j)\in \{1,\ldots,m\}^j$. 
A $j$-configuration 
in $\calV^\delta$ has \emph{type} $(s_1,\ldots,s_j)$ if its $l$-th entry is 
$\gamma A_{s_l}\times \gamma T_{s_l}^{-1}K_{s_l}$ or 
$\gamma R_{s_l}\times\gamma K_{s_l}$ for some $\gamma\in\Gamma$. 
A $j$-configuration in $\calW^\delta$ has \emph{type} 
$(s_1,\ldots,s_j)$ if its $l$-th entry is 
$\gamma A_{s_l}\times \gamma T_{s_l}^{-1}K_{s_l}$ for some $\gamma\in \Gamma$.

\begin{defn}\label{def: mult functions}
	Let $\mult^{\calV_\delta}(j):X\to \bbN$ be the random variable whose value at $x\in X$ 
	is the number of $j$-configurations in $\calV_\delta$ 
    with the property that the common intersection of all sets in the configuration with the set $\{x\}\times\widetilde M$ 
	is non-empty and the first set of the configuration is from $\calV^0_\delta$. 
	Correspondingly, $\mult^{\calW_\delta}(j): X\to\bbN$ is defined. 
	For $(s_1,\ldots,s_j)\in\{1,\ldots, m\}^j$ 
	the random variables $\mult^{\calV_\delta}(j;s_1,\ldots,s_j)$ and 
	$\mult^{\calW_\delta}(j;s_1,\ldots,s_j)$ are similarly defined but only 
	$j$-configurations of type $(s_1,\ldots,s_j)$ are counted. 
\end{defn}

\begin{lemma}\label{lem: reduction to W cover}
	For every type $(s_1,\ldots,s_j)$, $2\le j\le n+1$, the expected values satisfy
	\[
		 \expected \mult^{\calV_\delta}(j;s_1,\ldots,s_j)\le \expected \mult^{\calW_\delta}(j;s_1,\ldots,s_j)+\delta.
	\]
\end{lemma}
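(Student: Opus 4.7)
The plan is to expand both expectations via Fubini, identify their difference as a sum over ``mixed'' configurations containing at least one remainder-type entry $\gamma R_{s_l}\times\gamma K_{s_l}$, bound the measure of each such contribution by~$\epsilon$, and then count the admissible configurations using the finite set~$E$. Writing $V_l=X_l\times Y_l$ for the entries of a configuration,
\begin{align*}
\expected\mult^{\calV_\delta}(j;s_1,\ldots,s_j)
= \sum_{(V_1,\ldots,V_j)} \mathbf{1}\Bigl[\bigcap_{l=1}^j Y_l\ne\emptyset\Bigr]\cdot \mu\Bigl(\bigcap_{l=1}^j X_l\Bigr),
\end{align*}
where the sum ranges over $j$-configurations of type $(s_1,\ldots,s_j)$ in $\calV_\delta$ with $V_1\in\calV^0_\delta$; the analogous sum for $\calW_\delta$ is its restriction to configurations in which every $X_l$ is of $A$-form. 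Subtracting, the difference is the sub-sum over configurations having at least one $X_{l_0}=\gamma_{l_0}R_{s_{l_0}}$, and for each such configuration $\bigcap_l X_l\subset \gamma_{l_0}R_{s_{l_0}}$ has $\mu$-measure at most $\mu(R_{s_{l_0}})\le\epsilon$ by $\Gamma$-invariance of $\mu$.

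Next I would count admissible configurations. Because $V_1\in\calV^0_\delta$, the first entry has group-element $e$ and only two possibilities (the $A$- or the $R$-version at parameter $s_1$). For $l\ge 2$, the factor $Y_l$ lies in $\gamma_l(T_{s_l}^{-1}K_{s_l}\cup K_{s_l})$, and nonemptiness of $Y_1\cap Y_l$ together with~\eqref{eq: the set E} forces $\gamma_l\in E$. Each later entry therefore contributes at most $2|E|$ possibilities, giving at most $2\cdot(2|E|)^{j-1}=2^j|E|^{j-1}\le 2^{n+1}|E|^n$ admissible configurations in total, whether or not they are of mixed type.

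Combining the two bounds and invoking the calibration~\eqref{eq: choice of epsilon} yields
\[
\expected\mult^{\calV_\delta}(j;s_1,\ldots,s_j)-\expected\mult^{\calW_\delta}(j;s_1,\ldots,s_j)
\le 2^{n+1}|E|^n\cdot\epsilon = m^{-1}|E|^{-1}\delta\le \delta.
\]
The delicate step is the combinatorics at the first entry: pinning $V_1$ to the transversal $\calV^0_\delta$ kills any $\Gamma$-sum at $l=1$, and only because of this does the uniform bound $|E|$ on $\gamma_l$ for $l\ge 2$ suffice to absorb the factor $\epsilon$ from~\eqref{eq: choice of epsilon}. If one carelessly summed over a $\Gamma$-translate of the first entry, the estimate would blow up; the discipline of always putting the transversal representative first is what makes the counting go through. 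No further genuine obstacle is expected beyond this bookkeeping.
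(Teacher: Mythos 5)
Your proof is correct and follows essentially the same route as the paper: the paper bounds the difference $\mult^{\calV_\delta}(j;s_1,\ldots,s_j)-\mult^{\calW_\delta}(j;s_1,\ldots,s_j)$ pointwise by the crude count $(2\abs{E})^j$ on its support $E\cdot(R_1\cup\ldots\cup R_m)$, whose measure is at most $\abs{E}m\epsilon$, whereas you interchange sum and integral first and bound each mixed term by $\mu(R_{s_{l_0}})\le\epsilon$ times a configuration count $2^j\abs{E}^{j-1}$ -- the same ingredients (the set $E$ from~\eqref{eq: the set E}, invariance of $\mu$, and the calibration~\eqref{eq: choice of epsilon}) in a slightly different order, with your bookkeeping in fact marginally sharper.
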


\begin{proof}
	A crude estimate yields that 
	$\mult^{\calV_\delta}(j;s_1,\ldots,s_j)\le (2\abs{E})^j$, 
	where $E\subset\Gamma$ is the subset from~\eqref{eq: the set E}. 
	The support of 
	$\mult^{\calV_\delta}(j;s_1,\ldots,s_j)-\mult^{\calW_\delta}(j;s_1,\ldots,s_j)$ 
	is a subset of $E\cdot (R_1\cup \ldots\cup R_m)$. Hence the difference of 
	the corresponding expected values is at most 
	\[
		\mu\bigl( E\cdot (R_1\cup \ldots\cup R_m)  \bigr)(2\abs{E})^j
		\le 2^j\abs{E}^{j+1}m\epsilon\underset{\eqref{eq: choice of epsilon}}{\le}\delta.\qedhere
	\]
\end{proof}

\begin{lemma}\label{lem: going to zero for fixed type}
	Let $2\le j\le n+1$. If $j$ is greater than the multiplicity of $\calU$, then 
	\[
		\lim_{\delta\to 0} \expected \mult^{\calV_\delta}(j;s_1,\ldots,s_j)=\lim_{\delta\to 0} \expected \mult^{\calW_\delta}(j;s_1,\ldots,s_j)=0.
	\]
	for every type $(s_1,\ldots,s_j)$. 
\end{lemma}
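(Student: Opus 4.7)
The plan is to bound $\expected\mult^{\calW_\delta}(j;s_1,\ldots,s_j)$ directly by combining combinatorial counting with a small-measure estimate on the $X$-intersection; the statement for $\calV_\delta$ then follows from Lemma~\ref{lem: reduction to W cover}.

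I would start by analyzing an arbitrary configuration $(A_{s_1}\times T_{s_1}^{-1}K_{s_1},\gamma_2 A_{s_2}\times \gamma_2 T_{s_2}^{-1}K_{s_2},\ldots,\gamma_j A_{s_j}\times \gamma_j T_{s_j}^{-1}K_{s_j})$ contributing to the multiplicity at $x$, with a witness $\tilde y\in\bigcap_l\gamma_l T_{s_l}^{-1}K_{s_l}$ (where $\gamma_1=e$ since the first set lies in the transversal). Since $\gamma_l T_{s_l}^{-1}K_{s_l}\subset\gamma_l\bar U_{s_l}$, the image $\pr(\tilde y)$ lies in $\bigcap_l U_{s_l}$, so the hypothesis $j>\mult(\calU)$ forces two indices $a\ne b$ with $s_a=s_b=:s$. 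Distinct $\Gamma$-translates of $\bar U_s$ are disjoint connected components of $\pr^{-1}(U_s)$, so $\tilde y\in\gamma_a\bar U_s\cap\gamma_b\bar U_s$ forces $\gamma_a\bar U_s=\gamma_b\bar U_s$; since the $\Gamma$-stabilizer of $\bar U_s$ is exactly $\Lambda_s$, we deduce $\gamma_a^{-1}\gamma_b\in\Lambda_s\setminus\{e\}$ (nonidentity by distinctness of the configuration).

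Next I would bound the measure of the $X$-intersection $A_{s_1}\cap\gamma_2 A_{s_2}\cap\ldots\cap\gamma_j A_{s_j}$ above by $\mu(A_s\cap\gamma_a^{-1}\gamma_b A_s)$. For infinite $\Lambda_s$, the monotiles $T_s$ can be chosen with $|T_s|\to\infty$ as $\delta\to 0$ (by taking Følner subsequences), so the disjoint tiling $\{\lambda A_s\mid \lambda\in T_s\}$ gives $\mu(A_s)\le 1/|T_s|\to 0$. For finite $\Lambda_s$ one takes $T_s=\Lambda_s$ (whose $(F\cap\Lambda_s)$-boundary is empty); then $\{\lambda A_s\mid\lambda\in\Lambda_s\}$ are pairwise disjoint and $\mu(A_s\cap\gamma_a^{-1}\gamma_b A_s)=0$ because $\gamma_a^{-1}\gamma_b\in\Lambda_s\setminus\{e\}$.

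To finish, by~\eqref{eq: the set E} the nonemptiness of $T_{s_1}^{-1}K_{s_1}\cap\gamma_l T_{s_l}^{-1}K_{s_l}$ forces $\gamma_l\in E$ for every $l\ge 2$, so there are at most $|E|^{j-1}$ configurations of the given type at any fixed $x$. Multiplying by the measure estimate above gives $\expected\mult^{\calW_\delta}(j;s_1,\ldots,s_j)\to 0$ as $\delta\to 0$, and Lemma~\ref{lem: reduction to W cover} then yields the analogous statement for $\calV_\delta$. I expect the main subtlety to be the finite-$\Lambda_s$ case, where $|T_s|$ does not shrink with $\delta$ and one must instead rely on exact disjointness of the $\Lambda_s$-tiles rather than on the asymptotic bound $\mu(A_s)\le 1/|T_s|$.
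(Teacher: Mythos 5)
There is a genuine gap in the case of infinite $\Lambda_s$, and it sits exactly where you expect the argument to be easy. Your final estimate is (number of contributing configurations) $\times$ (measure per configuration) $\le |E|^{j-1}\cdot\mu(A_s)\le |E|^{j-1}/|T_s|$. But $E$ is not a constant of the problem: by \eqref{eq: the set E} it is chosen \emph{after} the monotiles $T_i$, hence depends on $\delta$, and it necessarily contains $T_s^{-1}T_s$ (for $\theta,\theta'\in T_s$ the sets $\theta^{-1}K_s$ and $(\theta^{-1}\theta')(\theta')^{-1}K_s$ coincide, so $\theta^{-1}\theta'\in E$). Thus $|E|\ge |T_s|$, and if you force $|T_s|\to\infty$ to make $\mu(A_s)\to 0$, your bound satisfies $|E|^{j-1}/|T_s|\ge |T_s|^{j-2}\ge 1$ for every $j\ge 2$; it never becomes small, let alone tends to $0$. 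Notice also that in the infinite case you never use the $(F\cap\Lambda_s,\delta)$-invariance of $T_s$, which is the only quantitative input of the construction that shrinks with $\delta$ -- a sign that the estimate cannot close. (Your assessment that the finite-$\Lambda_s$ case is the main subtlety is inverted: there one may take $T_s=\Lambda_s$ with empty boundary and everything is trivial; the infinite case is where the work lies.)

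The paper's proof avoids the factor $|E|^{j-1}$ by two devices you are missing. First, when extending an $(l-1)$-configuration by one more set $\gamma A_{s_l}\times\gamma T_{s_l}^{-1}K_{s_l}$ meeting a fixed $\lambda K_{s_1}$, the relation \eqref{eq: choice of F} gives $\gamma\in\lambda F T_{s_l}$, and the Ornstein--Weiss disjointness of the translates $\lambda\xi\theta A_{s_l}$, $\theta\in T_{s_l}$, shows that a fixed $x$ lies in at most $|F|$ of the candidate sets; so each extra entry costs a factor $|F|$ (independent of $\delta$), not $|E|$. Second, for the resulting $2$-configurations of type $(s,s)$ one does not bound the measure crudely by $\mu(A_s)$ times the number of pairs: writing $\gamma=\lambda\rho\theta^{-1}$ with $\rho\in F\cap\Lambda_s$, the disjointness of the tile translates forces $\theta^{-1}\in\partial T_s$ for any pair that can contribute, and summing $\mu(\lambda^{-1}A_s\cap\rho\theta^{-1}A_s)$ over the disjoint sets $\lambda^{-1}A_s$, $\lambda\in T_s^{-1}$, absorbs the factor $|T_s|$; this yields the bound $|F|\,|\partial T_s|\,\mu(A_s)\le |F|\,|\partial T_s|/|T_s|\le |F|\delta$, which is where the $(F\cap\Lambda_s,\delta)$-invariance finally enters and produces the decay as $\delta\to 0$. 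Your observations that the reduction to $\calW_\delta$ suffices, that the type must repeat, and that the repeated entries differ by an element of $\Lambda_s\setminus\{e\}$ are all correct and agree with the paper, but without the boundary argument and the summation over disjoint tiles the quantitative conclusion does not follow.
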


\begin{proof}
	By Lemma~\ref{lem: reduction to W cover} it suffices to show the second equality. 
	The multiplicity of $\calU=\{U_1,\ldots, U_m\}$ is the same as the one of 
	$\{\pr^{-1}(U_1),\ldots, \pr^{-1}(U_m)\}$. Since $\Gamma K_i= \pr^{-1}(U_i)$, 
	$\mult^{\calW_\delta}(j;s_1,\ldots,s_j)$ is constant zero or the type 
	$(s_1,\ldots,s_j)$ has two identical components. For the remaining proof we thus may 
	assume that $s_1=s_2$. 
	
	Fix $\delta>0$. Let $C_l(x)$ be the $l$-configurations that contribute 
	to $\mult^{\calW_\delta}(l;s_1,\ldots, s_l)(x)$ for $l\le j$ and $x\in X$, so that 
	$\mult^{\calW_\delta}(l;s_1,\ldots, s_l)(x)=|C_l(x)|$. Note that the first set of any  
	$l$-configuration in $C_l(x)$ is $A_{s_1}\times T_{s_1}^{-1}K_{s_1}$. For $\lambda\in T_{s_1}^{-1}$ 
	we define $C_l^\lambda(x)$ as the subset of $C_l(x)$ that consists of $l$-configurations 
	with the additional property 
	that the common intersection of all sets of the configuration with $A_{s_1}\times \lambda K_{s_1}$ 
	is non-empty. Obviously, the union of all $C_l^\lambda(x)$ is $C_l(x)$ but it is not  necessarily a disjoint union.  
	At least we get that 
	\[
		\mult^{\calW_\delta}(l;s_1,\ldots, s_l)(x)=|C_l(x)|\le \sum_{\lambda\in T_{s_1}^{-1}}|C_l^\lambda(x)|.
	\]
	Let $3\le l\le j$ and $\lambda\in T_{s_1}^{-1}$. Let us take a look at a fiber of the 
	projection $C_l^\lambda(x)\to C_{l-1}^\lambda(x)$ that drops the last set of a configuration. The cardinality 
	of any fiber is bounded by the number of elements $\gamma\in \Gamma$ such that $x\in \gamma A_{s_l}$ and 
	$\lambda K_{s_1}\cap \gamma T_{s_l}^{-1}K_{s_l}\ne\emptyset$. By~\eqref{eq: choice of F} 
	the latter implies that $\gamma\in \lambda F T_{s_l}$. For every $\xi\in F$ the sets $\lambda \xi \theta A_{s_l}$, where 
	$\theta$ runs through $T_{s_l}$, are pairwise disjoint, so the given $x$ can only be in one of them. Thus 
	any fiber has at most $|F|$ elements. We obtain inductively that 
	\[
		\sum_{\lambda\in T_{s_1}^{-1}}|C_j^\lambda(x)|\le |F|^{j-2} \sum_{\lambda\in T_{s_1}^{-1}}|C_2^\lambda(x)|.
	\]
	Let $s:=s_1=s_2$. Every $2$-configuration in one of the sets $C_2^\lambda(x)$ has type $(s,s)$. 
	Define 
	\[ f(x):=| \bigl\{(\lambda, \theta, \gamma)\in T_s^{-1}\times T_s^{-1}\times \Gamma\bs\{e\}\mid x\in A_s\cap \gamma A_s~~\text{ and }~~
		\lambda K_s\cap\gamma \theta K_s\ne\emptyset\bigr\}|.
	\]
	For every $x\in X$ we have 
	\[\sum_{\lambda\in T_{s_1}^{-1}}|C_2^\lambda(x)|\le f(x).\] 
	Hence $\mult^{\calW_\delta}(j;s_1,\ldots, s_j)$ is dominated by $|F|^{j-2}f$ and so 
	\begin{equation}\label{eq: domination}
		\expected\mult^{\calW_\delta}(j;s_1,\ldots, s_j)\le |F|^{j-2}\expected f.
	\end{equation}
	We shall now use for the first time that the monotiles $T_i$ are 
	$(F\cap\Lambda_i, \delta)$-invariant. 
	Let $(\lambda, \theta,\gamma)\in T_s^{-1}\times T_s^{-1}\times \Gamma\bs\{e\}$ be a triple such that 
	$\lambda K_s\cap\gamma \theta K_s\ne\emptyset$. By~\eqref{eq: choice of F} 
	there is $\rho\in F$ with $\gamma=\lambda\rho\theta^{-1}$. 
	Suppose that $\theta^{-1}\not\in \partial T_s$. 
	Then $\rho\theta^{-1}\in T_s$. Because of $\gamma\ne e$ one has 
	 $\rho\theta^{-1}\ne \lambda^{-1}$, which yields $\lambda^{-1}A_s\cap\rho\theta^{-1}A_s=\emptyset$, thus $A_s\cap\gamma A_s=\emptyset$. 
	 Therefore, 
	 \[
	 	\expected f\le \sum_{(\lambda, \theta, \rho)}\mu\bigl(A_s\cap \lambda\rho\theta^{-1}A_s\bigr)=
		\sum_{(\lambda, \theta, \rho)}\mu\bigl(\lambda^{-1}A_s\cap\rho\theta^{-1}A_s\bigr)
	 \]
	where the summation runs over all triples in $T_s^{-1}\times (\partial T_s)^{-1}\times F$. 
	Since the sets $\lambda^{-1}A_s$, where $\lambda$ runs through $T_s^{-1}$, are 
	disjoint, we have 
	\[
		\sum_{(\lambda, \theta, \rho)}\mu\bigl(\lambda^{-1}A_s\cap\rho\theta^{-1}A_s\bigr)\le 
		\sum_{(\theta, \rho)}\mu\bigl(\rho\theta^{-1}A_s\bigr)=
		\sum_{(\theta, \rho)}\mu\bigl(A_s\bigr)
	\]
	where $(\theta, \rho)$ runs over $(\partial T_s)^{-1}\times F$. 
	Because of $\mu(A_s)\le 1/\abs{T_s}$ we obtain that 
	\[
		\expected f\le\abs{F}\abs{\partial T_s}\mu(A_s)\le \abs{F}\abs{\partial T_s}/\abs{T_s}\le |F|\delta. 
	\]
	With~\eqref{eq: domination} the proof is completed. 
\end{proof}

\begin{theorem}\label{thm: smallness of multiplicity}
	Let $2\le j\le n+1$. If $j$ is greater than the multiplicity of~$\calU$, then $\lim_{\delta\to 0} \expected \mult^{\calV_\delta}(j)=0$. 
	\end{theorem}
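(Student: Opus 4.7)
My plan is simply to decompose the random variable $\mult^{\calV_\delta}(j)$ as a finite sum over all possible types and then apply Lemma~\ref{lem: going to zero for fixed type} termwise.

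The key observation is that every $j$-configuration counted by $\mult^{\calV_\delta}(j)$ has a well-defined type in $\{1,\ldots,m\}^j$: the $l$-th entry is of the form $\gamma A_{s_l}\times \gamma T_{s_l}^{-1}K_{s_l}$ or $\gamma R_{s_l}\times \gamma K_{s_l}$ for a uniquely determined $s_l\in\{1,\ldots,m\}$ (the index $s_l$ is recorded by the subsets $K_{s_l}, A_{s_l}$, etc., which are pairwise distinct by construction). Hence pointwise on $X$ we have the finite decomposition
\[
    \mult^{\calV_\delta}(j)=\sum_{(s_1,\ldots,s_j)\in\{1,\ldots,m\}^j}\mult^{\calV_\delta}(j;s_1,\ldots,s_j).
\]

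Taking expectations and using linearity gives
\[
    \expected\mult^{\calV_\delta}(j)=\sum_{(s_1,\ldots,s_j)\in\{1,\ldots,m\}^j}\expected\mult^{\calV_\delta}(j;s_1,\ldots,s_j).
\]
The number of terms is $m^j$, which depends only on $\calU$ and $j$, not on~$\delta$. Since $j$ exceeds the multiplicity of $\calU$, Lemma~\ref{lem: going to zero for fixed type} tells us that each summand tends to $0$ as $\delta\to 0$. A finite sum of sequences converging to $0$ converges to $0$, and the theorem follows.

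There is no real obstacle here: the essential dynamical and combinatorial work is carried out in Lemma~\ref{lem: going to zero for fixed type} (and in the transition from $\calV_\delta$ to $\calW_\delta$ via Lemma~\ref{lem: reduction to W cover}), so this theorem is just the aggregation step over the finitely many types.
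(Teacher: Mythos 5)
Your proposal is correct and coincides with the paper's own argument: decompose $\mult^{\calV_\delta}(j)$ as the sum of $\mult^{\calV_\delta}(j;s_1,\ldots,s_j)$ over the $m^j$ possible types and apply Lemma~\ref{lem: going to zero for fixed type} to each term. Since the number of types is independent of $\delta$, the finite sum of terms tending to $0$ tends to $0$, exactly as in the paper.
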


\begin{proof}
		The random variable $\mult^{\calV_\delta}(j)$ is the sum of the random variables  $\mult^{\calV_\delta}(j;s_1,\ldots,s_j)$, where $(s_1,\ldots,s_j)$ runs through all $m^j$ possible types. 
	 Thus the statement follows from Lemma~\ref{lem: going to zero for fixed type}. 
\end{proof}

\subsection{From measurable to open covers on $X\times\widetilde M$} 
\label{sub:approximating_measurable_covers}

Our next goal is to show that we can replace the measurable sets $A_i$ and
$R_i$ in the definition of $\calV_\delta$ 
by cylindrical (in particular, open) sets without losing
the property stated in Theorem~\ref{thm: smallness of multiplicity}. 

\begin{lemma}\label{lem: approximation by cylindrical}
Let $\delta>0$, and let $\calV_\delta$ be the measurable 
cover in Definition~\ref{def: measurable cover}. 
For every $\epsilon>0$ there are cylindrical subsets $A_i^c$ and $R_i^c$ of $X$ for every $i\in\{1,\ldots, m\}$ such that 
\begin{enumerate}
  \item $\calV_\delta^c:=\bigl\{\gamma A_i^c\times \gamma
        T_i^{-1}K_i\mid i\in\{1,\ldots,m\}, \gamma\in \Gamma\bigr\}\cup
        \bigl\{\gamma R_i ^c\times\gamma K_i\mid i\in\{1,\ldots,m\},
        \gamma\in \Gamma\bigr\}$ is a $\Gamma$-equivariant measurable
        cover. 
\item $\expected\mult^{\calV_\delta^c}(k)<\expected\mult^{\calV_\delta}(k)+\epsilon$ for $k\in
  \{1,\ldots, n+1\}$. 
\end{enumerate}
\end{lemma}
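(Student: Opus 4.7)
The plan is to enlarge each $A_i$ and $R_i$ to a slightly larger cylindrical superset; the cover property will then be inherited automatically, and the only real work is showing that the multiplicity function grows only slightly in expectation.

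First I would produce the cylindrical approximations. Since $X$ is profinite, the cylindrical subsets form a Boolean algebra that generates the Borel $\sigma$-algebra; combined with the regularity of the Haar measure $\mu$, this yields, for any $\eta>0$, cylindrical supersets $A_i^c\supseteq A_i$ and $R_i^c\supseteq R_i$ with $\mu(A_i^c\setminus A_i)<\eta$ and $\mu(R_i^c\setminus R_i)<\eta$ for every $i\in\{1,\ldots,m\}$. Because we enlarge rather than approximate on both sides, the inclusions $A_i\subseteq A_i^c$ and $R_i\subseteq R_i^c$ imply that every element of $\calV_\delta$ is contained in the corresponding element of $\calV_\delta^c$. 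Hence $\calV_\delta^c$ inherits the cover property of $\calV_\delta$; the product structure of its elements, the $\Gamma$-equivariance, and the finiteness ($2m$) of the orbit count are immediate. This establishes condition~(1).

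For condition~(2), observe that enlarging sets can only add configurations, so the pointwise inequality $\mult^{\calV_\delta^c}(k)(x)\ge\mult^{\calV_\delta}(k)(x)$ holds. A $k$-configuration in $\calV_\delta^c$ that contributes at $x$ but whose obvious $\calV_\delta$-counterpart does not must have some entry indexed by $(s_l,\gamma_l)$ for which $x\in\gamma_l(A_{s_l}^c\setminus A_{s_l})$ or $x\in\gamma_l(R_{s_l}^c\setminus R_{s_l})$. I would estimate the expected excess by summing over the position $l$ of such a ``new'' entry, over the type $(s_1,\ldots,s_k)$, and over the translates $(\gamma_1,\ldots,\gamma_k)$. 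Exactly as in Lemma~\ref{lem: reduction to W cover}, the first-set convention forces $\gamma_1=e$, and the requirement that the $\widetilde M$-factors have common intersection -- by the property~\eqref{eq: the set E} of the set $E$ -- restricts $(\gamma_2,\ldots,\gamma_k)$ to a finite set of cardinality at most $|E|^{k-1}$ once the type is fixed. The $\Gamma$-invariance of $\mu$ gives $\mu(\gamma_l(A_{s_l}^c\setminus A_{s_l}))<\eta$ and the analogous bound for $R$, so that the expected excess is bounded by $C(n,m,|E|)\cdot\eta$ for each $k\le n+1$. Choosing $\eta$ so that $C(n,m,|E|)\cdot\eta<\epsilon$ finishes the proof.

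The main obstacle is nothing deep, but purely combinatorial: organising the finite count of $k$-configurations passing through a fixed point $x$, so that each appearance of a ``new'' set contributes at most $\eta$ after integration. This is a straightforward adaptation of the counting arguments already carried out in Lemma~\ref{lem: reduction to W cover} and Lemma~\ref{lem: going to zero for fixed type}, and no new ingredient beyond the $\Gamma$-invariance of $\mu$ and the finiteness of $E$ is needed.
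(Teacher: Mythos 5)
The gap is at the very first step: cylindrical supersets with small excess measure need not exist. Cylindrical sets are finite unions of basic clopen sets, hence clopen, and outer regularity of $\mu$ only provides \emph{open} supersets, not clopen ones. Concretely, let $U\subset X$ be a dense open set with $\mu(U)\le 1/2$ (cover a countable dense subset of $X$ by basic balls whose measures sum to at most $1/2$); since a cylindrical set is closed, the only cylindrical superset of $U$ is $X$ itself, so every cylindrical $C\supseteq U$ has $\mu(C\setminus U)\ge 1/2$. The sets $A_i$ and $R_i$ come from the Ornstein--Weiss theorem and are merely measurable, with no topological control, so you cannot assume they admit cylindrical supersets with small excess. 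Consequently the covering property in item (1) -- which is the actual content of the lemma -- is not ``inherited automatically'' as you claim. The paper's proof addresses exactly this point: it takes \emph{open} supersets $A_i^o\supset A_i$, $R_i^o\supset R_i$ with small excess (this is what regularity gives), writes each as an increasing union of cylindrical sets, and then uses compactness of the orbit space $X\times_\Gamma\widetilde M$ together with openness of the projection $X\times\widetilde M\to X\times_\Gamma\widetilde M$ to conclude that some finite stage of the exhaustion already yields a cover. That compactness argument is the missing ingredient in your proposal.

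Your treatment of item (2) is sound in outline and close in spirit to the paper's computation: by Fubini the expectations are finite sums, indexed by the finitely many translates permitted by the set $E$ from~\eqref{eq: the set E}, of measures of intersections, and $\Gamma$-invariance of $\mu$ controls each term. But as written it leans on the superset property (the pointwise inequality $\mult^{\calV_\delta^c}(k)\ge\mult^{\calV_\delta}(k)$ and the ``new entry'' bookkeeping), which is unavailable once the first step is corrected; with the paper's cylindrical sets, which need not contain $A_i$, configurations can be lost as well as gained, and one instead argues that each of the finitely many summands converges as the approximation parameters tend to their limits. So the counting part is repairable, but the proof cannot be completed along the route you propose without replacing the first step by something like the paper's compactness argument.
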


\begin{proof}
Let $\epsilon_0>0$. 
Being the Haar measure on the compact topological group~\eqref{eq: projective limit}, $\mu$ is regular. 
Hence there are open subsets
$A_i^o\supset A_i$ and $R_i^o\supset R_i$ in $X$ for every $i\in\{1,\ldots,
m\}$ such that $\mu(A_i^o\backslash A_i)<\epsilon_0$ and 
$\mu(R_i^o\backslash R_i)<\epsilon_0$. For any $i\in\{1,\ldots, m\}$ 
we can write $A_i^o$ and $R_i^o$ 
as increasing countable unions of cylindrical subsets:
\[
	A_i^o=\bigcup_{q\in\bbN}A_i^{(q)},\qquad R_i^o=\bigcup_{q\in\bbN}R_i^{(q)}
\]
since $\calO$ is a countable subbasis of the topology of~$X$. 
Let $\proj: X\times\widetilde M\to X\times_\Gamma\widetilde M$ be the quotient map for the diagonal action. 
Let 
\[
	S_q:=\bigcup_{i\in\{1,\ldots,m\}}\bigcup_{\gamma\in \Gamma}\bigl(\gamma A_i^{(q)}\times \gamma
	        T_i^{-1}K_i~\cup \gamma R_i^{(q)}\times \gamma
	        K_i\bigr).
\]
Since the sets in $\calV_\delta$ cover $X\times\widetilde M$, 
we have the following increasing unions: 
\[
	X\times\widetilde M=\bigcup_{q\in\bbN}S_q,\qquad X\times_\Gamma\widetilde M=\bigcup_{q\in\bbN}\proj(S_q)
\]
The orbit space $X\times_\Gamma\widetilde M$ 
is compact since $M$ and $X$ are compact. Further, $\proj$ is an open map. 
By compactness there is
$q_0\in\bbN$ such that $X\times_\Gamma\widetilde M=\proj(S_q)$ for every $q\ge q_0$. 
Hence $X\times\widetilde M=S_q$ for $q\ge q_0$ and 
\[
	\calW_q:=\bigl\{\gamma A_i^{(q)}\times \gamma
	        T_i^{-1}K_i\mid i\in\{1,\ldots,m\}, \gamma\in \Gamma\bigr\}\cup
	        \bigl\{\gamma R_i ^{(q)}\times\gamma K_i\mid i\in\{1,\ldots,m\},
	        \gamma\in \Gamma\bigr\}
\]
is a $\Gamma$-equivariant measurable cover for $q\ge q_0$. 
We claim that 
\begin{equation}\label{eq: measure approximation}
\expected\mult^{\calW_q}(k)<\expected\mult^{\calV_\delta}(k)+\epsilon
\end{equation}
for all $k\in\{1,\ldots,n+1\}$ provided $\epsilon_0>0$ is sufficiently small and $q\ge q_0$ is sufficiently 
large. For such $\epsilon_0$ and $q\ge q_0$ we then set $A_i^c:=A_i^{(q)}$ and 
$R_i^{c}:=R_i^{(q)}$ which finishes the proof. 

To show~\eqref{eq: measure approximation} it suffices to verify that 
for a fixed $k$-configuration $(s_1,\ldots, s_k)$ we have 
\begin{equation}\label{eq: measure approximation for configuration}
\expected\mult^{\calW_q}(k;s_1,\ldots,s_k)<\expected\mult^{\calV_\delta}(k;s_1,\ldots, s_k)+\epsilon
\end{equation}
provided $\epsilon_0>0$ is sufficiently small and $q\ge q_0$ is sufficiently 
large. To this end, we rewrite the expected values using Fubini's theorem. To formulate the result, we 
set: 
\[
		B_i(b) := \begin{cases}
		                A_i & \text{if $b=0$;}\\
						R_i & \text{if $b=1$;}
				\end{cases}~\text{ and }~
		C_i(b) := \begin{cases}
		                    T_i^{-1}K_i & \text{ if $b=0$;}\\
							K_i         & \text{ if $b=1$.}
				\end{cases}
\]
Similarly, we define $B_i^{(q)}(b)$ with $A_i$ and $R_i$ being replaced by $A_i^{(q)}$ and $R_i^{(q)}$, respectively. 
By Fubini's theorem we have 
\[
	\expected\mult^{\calV_\delta}(k;s_1,\ldots, s_k)=\sum_{\substack{\gamma_2,\ldots, \gamma_k\\b_1,\ldots,b_k}} \mu(B_{s_1}(b_1)\cap \gamma_2B_{s_2}(b_2)\cap\ldots\cap \gamma_kB_{s_k}(b_k))
\]
where the sum runs through $\gamma_2,\ldots\gamma_k\in\Gamma$ and $b_1,\ldots, b_k\in\{0,1\}$ with the property 
that $C_{s_1}(b_1)\cap \gamma_2 C_{s_2}(b_2)\cap\ldots \gamma_k C_{s_k}(b_k)\ne\emptyset$. For 
$\expected\mult^{\calW_q}(k;s_1,\ldots,s_k)$ we have a similar expression. Letting $\epsilon_0\to 0$ and $q\to\infty$, 
each summand $\mu(B_1^{(q)}(b_1)\cap \gamma_2B_2^{(q)}(b_2)\cap\ldots\cap \gamma_kB_k^{(q)}(b_k))$ tends to 
$\mu(B_1(b_1)\cap \gamma_2B_2(b_2)\cap\ldots\cap \gamma_kB_k(b_k))$. This 
yields~\eqref{eq: measure approximation for configuration} and thus~\eqref{eq: measure approximation}. 
\end{proof}

\subsection{The passage to open covers on $\Gamma/\Gamma_q\times\widetilde M$} 

Next we describe how we produce from a $\Gamma$-equivariant measurable cover on $X\times\widetilde M$ 
a sequence of $\Gamma$-equivariant open covers on $\Gamma/\Gamma_q\times \widetilde M$, 
indexed by $q\in\bbN$, which are compatible with respect to the natural projections 
$\Gamma/\Gamma_q\times\widetilde M\to \Gamma/\Gamma_{q-1}\times\widetilde M$. 

Let us fix $\delta>0$. 
By Lemma~\ref{lem: approximation by cylindrical}
one can replace the sets $A_i$ and $R_i$ in $\calV_\delta$ by cylindrical 
sets $A_i^c$ and $R_i^c$ such that 
\[
\calV_\delta^c=\bigl\{\gamma A_i^c\times \gamma
        T_i^{-1}K_i\mid i\in\{1,\ldots,m\}, \gamma\in \Gamma\bigr\}\cup
        \bigl\{\gamma R_i ^c\times\gamma K_i\mid i\in\{1,\ldots,m\},
        \gamma\in \Gamma\bigr\}
\]
is a $\Gamma$-equivariant measurable cover with 
\begin{equation}\label{eq: smaller than twice expectation}
  \expected\mult^{\calV_\delta^c}(n+1)<2\cdot \expected\mult^{\calV_\delta}(n+1). 
  \end{equation}
From $\calV_\delta^c$ we obtain the $\Gamma$-equivariant
open cover of $\Gamma/\Gamma_q\times\widetilde M$: 
\begin{multline}\label{eq: cover from correspondence principle}
\calV_\delta(q):=\bigl\{\{\pi_q(\gamma x)\}\times \gamma
        T_i^{-1}K_i\mid i\in\{1,\ldots,m\}, \gamma\in \Gamma, x\in A_i^c\bigr\}~\cup\\
        \bigl\{\{\pi_q(\gamma x)\}\times\gamma K_i\mid i\in\{1,\ldots,m\},
        \gamma\in \Gamma, x\in R_i^c\bigr\}
\end{multline}

We define the function $ \mult^{\calV_\delta(q)}(j): \Gamma/\Gamma_q\to \bbN$ similarly 
as $\mult^{\calV_\delta^c}(j)$. The value $\mult^{\calV_\delta(q)}(j)(y)$ 
at $y\in\Gamma/\Gamma_q$ 
is the number of $j$-configurations 
in $\calV_\delta(q)$ with the property 
that the common intersection of all sets in the configuration 
with $\{y\}\times\widetilde M$ is non-empty and the first set of the 
configuration is $\{y\}\times T_i^{-1}K_i$ or $\{y\}\times K_i$ for some $i\in\{1,\ldots, m\}$. 
We regard this function as a random variable on 
$\Gamma/\Gamma_i$ endowed with the equidistributed probability measure. Let 
\begin{equation}\label{eq: maximal level}
q_0:=\max\{ l(A_i^c), l(R_i^c)\mid i=1,\ldots, m)\}
\end{equation}
be the maximal level of the cylindrical sets appearing in
$\calV_\delta^c$.

\begin{lemma}\label{lem:technical lemma} For every $q\ge q_0$ and $j\in\bbN$ one has 
$\expected\mult^{\calV_\delta^c}(j)=\expected \mult^{\calV_\delta(q)}(j)$.
\end{lemma}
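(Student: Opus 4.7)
The plan is to show that for $q\ge q_0$ the random variable $\mult^{\calV_\delta^c}(j)$ on $X$ factors through the projection $\pi_q\colon X\to \Gamma/\Gamma_q$ and its pushforward equals $\mult^{\calV_\delta(q)}(j)$; equality of expectations then follows because $\pi_q$ sends the Haar measure $\mu$ to the equidistributed probability measure on $\Gamma/\Gamma_q$.

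The key input is~\eqref{eq: level equality}: since $q\ge q_0\ge l(A_i^c),l(R_i^c)$ for every $i$, both $A_i^c$ and $R_i^c$ equal $\pi_q^{-1}(\pi_q(A_i^c))$ and $\pi_q^{-1}(\pi_q(R_i^c))$ respectively. In particular, for any $x\in X$ and any $\gamma\in\Gamma$,
\[
    x\in \gamma A_i^c \iff \pi_q(x)\in \gamma\pi_q(A_i^c),
\]
and similarly for $R_i^c$. First I would use this to set up a bijection between the $j$-configurations in $\calV_\delta^c$ that contribute to $\mult^{\calV_\delta^c}(j)(x)$ and the $j$-configurations in $\calV_\delta(q)$ that contribute to $\mult^{\calV_\delta(q)}(j)(\pi_q(x))$. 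Concretely, a configuration
\[
    \bigl(\gamma_l B_{s_l}^c\times \gamma_l C_{s_l}\bigr)_{l=1}^{j}\quad\text{ (with $B_{s_l}^c\in\{A_{s_l}^c,R_{s_l}^c\}$ and $C_{s_l}\in\{T_{s_l}^{-1}K_{s_l},K_{s_l}\}$ matched accordingly)}
\]
hitting $\{x\}\times \widetilde M$ and with first entry from the transversal corresponds to the configuration
\[
    \bigl(\{\pi_q(\gamma_l x_l)\}\times \gamma_l C_{s_l}\bigr)_{l=1}^{j}
\]
in $\calV_\delta(q)$ hitting $\{\pi_q(x)\}\times\widetilde M$, where $x_l\in B_{s_l}^c$ is chosen so that $\pi_q(\gamma_l x_l)=\pi_q(x)$; by $\pi_q$-saturation such $x_l$ exists precisely when $x\in\gamma_l B_{s_l}^c$, and the second factors $\gamma_l C_{s_l}$ as well as the condition on the first entry depend only on the indexing data $(\gamma_l,s_l)$, not on the chosen $x_l$.

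From this bijection it follows directly that $\mult^{\calV_\delta^c}(j)(x)=\mult^{\calV_\delta(q)}(j)(\pi_q(x))$ for every $x\in X$. Integrating and using that $(\pi_q)_\ast\mu$ is the normalized counting measure on $\Gamma/\Gamma_q$ yields
\[
    \expected\mult^{\calV_\delta^c}(j)=\int_X \mult^{\calV_\delta(q)}(j)\circ\pi_q\, d\mu=\expected\mult^{\calV_\delta(q)}(j).
\]

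The main obstacle I anticipate is the bookkeeping in the bijection: one must verify that the condition ``first set of the configuration lies in the chosen $\Gamma$-transversal'' matches on both sides, and that distinct configurations on $X$ correspond to distinct configurations on $\Gamma/\Gamma_q$ (i.e.\ no two sets in a $\calV_\delta^c$-configuration can collapse under the projection). The latter is automatic because the second factor $\gamma_l C_{s_l}\subset\widetilde M$ is preserved, so if two sets had identical image in $\calV_\delta(q)$ they already coincided in $\calV_\delta^c$.
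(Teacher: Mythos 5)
Your proposal is correct and follows essentially the same route as the paper: using that for $q\ge q_0$ the cylindrical sets $A_i^c,R_i^c$ are $\pi_q$-saturated (equation~\eqref{eq: level equality}), you build the same component-wise bijection between contributing $j$-configurations, obtaining the pointwise identity $\mult^{\calV_\delta^c}(j)(x)=\mult^{\calV_\delta(q)}(j)(\pi_q(x))$, and then conclude via $(\pi_q)_\ast\mu$ being the equidistributed measure on $\Gamma/\Gamma_q$. Your closing remarks on the transversal condition and on non-collapsing of configurations are exactly the bookkeeping the paper's proof handles (tersely) with its injectivity/surjectivity claim, so there is nothing to add.
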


\begin{proof}
	For $x\in X$ and $y\in\Gamma/\Gamma_q$ 
	let $C_j(x)\subset (\calV_\delta^c)^j$ and $\tilde C_j(y)\subset\calV_\delta(q)^j$ 
	be the sets of $j$-configurations 
	that contribute to $\mult^{\calV_\delta^c}(j)(x)$ and $\mult^{\calV_\delta(q)}(j)(y)$, 
	respectively. So,  $\mult^{\calV_\delta^c}(j)(x)=|C(x)|$ and 
	$\mult^{\calV_\delta(q)}(j)(y)=|\tilde C(y)|$. 
	
	The map $\phi(x): C_j(x)\to\tilde{C}_j(\pi_q(x))$ is defined component-wise:  
	a $j$-configuration 
	whose $i$-th set is $A\times U$ (thus $x\in A$) is sent 
	to the $j$-configuration whose $i$-th set is $\{\pi_q(x)\}\times U$. 
    Injectivity of $\phi(x)$ is clear. Surjectivity is implied by 
    the equivalences  (cf.~\eqref{eq: level equality})
	\[\pi_q(x)\in \pi_q(A_i^c)\Leftrightarrow x\in A_i^c~\text{ and }~
	\pi_q(x)\in \pi_q(R_i^c)\Leftrightarrow x\in R_i^c\] 
	provided $q$ is 
	at least the level of $A_i^c$ and $R_i^c$. 
	Hence $\mult^{\calV_\delta^c}(j)(x)=\mult^{\calV_\delta(q)}(j)(\pi_q(x))$ 
	for every $x\in X$. That the pushforward of the measure $\mu$ under the map 
	$\pi_q$ is the equidistributed probability measure on $\Gamma/\Gamma_q$ finishes the 
	proof. 
\end{proof}

\subsection{Conclusion of the proof of Theorem~\ref{thm: amenable covers}}

The reader is referred 
to~\cite{hatcher}*{Chapter~ 2.1.} for a discussion of the upcoming 
notion of \emph{$\Delta$-complex}. Its historical name is \emph{semi-simplicial complex}. 
In brief, a $\Delta$-complex is like a simplicial complex
where one drops the requirement that a simplex is uniquely determined
by its vertices. A $1$-simplex in a $\Delta$-complex, for instance, might be a loop. 

\begin{lemma}\label{lem: final lemma for amenable thm}
For every $\delta>0$ there is $q_0\in\bbN$ with the following property. 
For  every $q\ge q_0$ there is a $\Delta$-complex $S(q)$ such that 
\begin{enumerate}
\item $S(q)$
has at most $2[\Gamma:\Gamma_q] \expected\mult^{\calV_\delta}(n+1)$ many $n$-simplices, 
\item and there is a homotopy retract 
\end{enumerate} 
\[   \xymatrix{   \Gamma_q\bs\widetilde M\ar[r]^f &
  S(q)\ar@/^1pc/[l]^g }~\text{ with } g\circ f\simeq \id_{\Gamma_q\bs\widetilde M}.
\]

\end{lemma}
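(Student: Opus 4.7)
I propose to take $S(q)$ to be the orbit space of the nerve $N(\calV_\delta(q))$ under the induced diagonal $\Gamma$-action, endowed with the $\Delta$-complex structure descended from the simplicial structure of the nerve. The threshold $q_0$ is taken to be the maximal level~\eqref{eq: maximal level} so that Lemma~\ref{lem:technical lemma} applies. A $\Gamma$-equivariant partition of unity subordinate to the locally finite $\Gamma$-invariant open cover $\calV_\delta(q)$ yields a $\Gamma$-equivariant nerve map $\tilde f\colon\Gamma/\Gamma_q\times\widetilde M\to N(\calV_\delta(q))$. Since $\Gamma$ acts freely on $\Gamma/\Gamma_q\times\widetilde M$ with quotient canonically $\Gamma_q\bs\widetilde M$, passing to $\Gamma$-orbits yields $f\colon\Gamma_q\bs\widetilde M\to S(q)$.

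For the count of $n$-simplices, observe that each $n$-simplex of $S(q)$ is a $\Gamma$-orbit of an ordered $(n+1)$-configuration in $\calV_\delta(q)$ with non-empty common intersection. Every such orbit has at least one representative whose first entry has the form $\{y\}\times T_i^{-1}K_i$ or $\{y\}\times K_i$, that is, one counted by $\mult^{\calV_\delta(q)}(n+1)(y)$. Hence
\[
\#\{n\text{-simplices of }S(q)\}\le \sum_{y\in\Gamma/\Gamma_q}\mult^{\calV_\delta(q)}(n+1)(y)=[\Gamma:\Gamma_q]\cdot\expected\mult^{\calV_\delta(q)}(n+1),
\]
and combining Lemma~\ref{lem:technical lemma} with~\eqref{eq: smaller than twice expectation} bounds this by $2[\Gamma:\Gamma_q]\cdot\expected\mult^{\calV_\delta}(n+1)$, as required.

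The main obstacle is producing the homotopy retract. I would first construct a $\Gamma$-equivariant map $\bar g\colon N(\calV_\delta(q))\to\Gamma/\Gamma_q\times\widetilde M$ by induction on skeleta: on the zero-skeleton choose $\bar g(v)\in S_v$ for one vertex $v$ per $\Gamma$-orbit and extend equivariantly; on a $k$-simplex $[v_0,\ldots,v_k]$ the non-emptiness of $\bigcap_i S_{v_i}$ forces all $v_i$ to share a single $\Gamma/\Gamma_q$-coordinate $y$, so the already-defined boundary maps into the contractible slice $\{y\}\times\widetilde M$ and an equivariant extension exists by standard obstruction theory. Passing to $\Gamma$-orbits gives $g\colon S(q)\to\Gamma_q\bs\widetilde M$. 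To verify $g\circ f\simeq\id$, restrict $\tilde f$ and $\bar g$ to the slice $\{e\Gamma_q\}\times\widetilde M\cong\widetilde M$, which is preserved by $\Gamma_q\subset\Gamma$; the restriction of $\bar g\circ\tilde f$ is then a $\Gamma_q$-equivariant self-map of $\widetilde M$ viewed as the universal cover of $\Gamma_q\bs\widetilde M$. Any such map descends to a self-map of $\Gamma_q\bs\widetilde M$ whose induced endomorphism of $\pi_1(\Gamma_q\bs\widetilde M)\cong\Gamma_q$ is necessarily an inner automorphism (the condition equivalent to the $\Gamma_q$-equivariance of the lift), and by asphericity of $\Gamma_q\bs\widetilde M$ such a self-map is freely homotopic to the identity, yielding the desired retract.
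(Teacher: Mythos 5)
Your proposal is correct, and most of it coincides with the paper's argument: same $S(q)=\Gamma\bs N(q)$ with $q_0$ the maximal level \eqref{eq: maximal level}, same nerve map $\tilde f$ from an equivariant partition of unity, the same count of $n$-simplices via $\expected\mult^{\calV_\delta(q)}(n+1)$, Lemma~\ref{lem:technical lemma} and \eqref{eq: smaller than twice expectation}, and the same skeleton-by-skeleton construction of the equivariant map out of the nerve. Where you genuinely diverge is the homotopy $g\circ f\simeq\id$: the paper builds a $\Gamma$-equivariant homotopy $\tilde g\circ\tilde f\simeq\id$ upstairs on $\Gamma/\Gamma_q\times\widetilde M$, again by induction over skeleta using only that each component $\{y\}\times\widetilde M$ is contractible, and then passes to orbit spaces; you instead descend first and invoke the classification of homotopy classes of maps into an aspherical space by conjugacy classes of $\pi_1$-homomorphisms, observing that $g\circ f$ lifts to a strictly $\Gamma_q$-equivariant self-map of $\widetilde M$ and hence induces an inner automorphism of $\Gamma_q$. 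Both work; your finish is shorter but leans on that standard aspherical-space rigidity statement and on the slice-preservation of $\bar g\circ\tilde f$ (which, as you implicitly use, is automatic: the components of both the nerve and of $\Gamma/\Gamma_q\times\widetilde M$ are indexed by cosets, and vertices map into their sets), whereas the paper's homotopy stays equivariant throughout and needs nothing beyond contractibility of the slices. One point you pass over that the paper checks explicitly and that underlies both your ``one vertex per orbit, extend equivariantly'' step and the descended $\Delta$-structure on $S(q)$: the $\Gamma$-action on the nerve is free and no nontrivial element preserves a simplex, which follows from relative compactness of the sets $T_i^{-1}K_i$, properness of the action on $\widetilde M$, and torsion-freeness of $\Gamma$, making $N(q)$ a free $\Gamma$-CW-complex in the sense of \cite{dieck}; you should include this verification, but it is routine and does not affect the correctness of your route.
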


\begin{proof}[Proof of Lemma~\ref{lem: final lemma for amenable thm}]

Let $\delta>0$. We consider the $\Gamma$-equivariant measurable 
cover $\calV_\delta^c$ from the previous subsection which we obtained 
by an application of Lemma~\ref{lem: approximation by cylindrical} to 
$\calV_\delta$. It satisfies~\eqref{eq: smaller than twice expectation}. 
Let $q_0$ be defined as in~\eqref{eq: maximal level}, as the maximal level of 
cylindrical subsets occuring in $\calV_\delta^c$. For every $q\in\bbN$ we 
obtain a $\Gamma$-equivariant cover $\calV_\delta(q)$ 
of $\Gamma/\Gamma_q\times\widetilde M$ 
from $\calV_\delta^c$ by~\eqref{eq: cover from correspondence principle}. 
By~\eqref{eq: smaller than twice expectation} 
and Lemma~\ref{lem:technical lemma} we have 
\begin{equation}\label{eq: endgame expectation}
		\expected \mult^{\calV_\delta(q)}(n+1)<2\expected\mult^{\calV_\delta}(n+1)~~\text{ for $q\ge q_0$.}
\end{equation}
Let $q\ge q_0$, and let $N(q)$ be the nerve of $\calV_\delta(q)$. 
Recall that the nerve of a
cover is the  simplicial complex whose vertices correspond to the
subsets of the cover such that $(k+1)$ subsets span a $k$-simplex if
they have a non-empty intersection. The $\Gamma$-action on $\calV_\delta(q)$
induces a simplicial $\Gamma$-action on $N(q)$.  Since the sets
in $\calV_\delta(q)$ are relatively compact, the 
$\Gamma$-action on $\widetilde M$ is proper, and $\Gamma$ is torsion-free, it follows that 
the
$\Gamma$-action on $N(q)$ is free. Moreover, if a simplex is invariant
under some $\gamma\in\Gamma$ as a set, then $\gamma=e$.  Thus $N(q)$
is a free $\Gamma$-CW-complex~\cite{dieck}*{Proposition~(1.15) on
  p.~101}. A free $\Gamma$-CW-complex is a Hausdorff space with a
$\Gamma$-action that is built inductively by attaching equivariant
cells $\Gamma\times D^i$ via equivariant attaching maps
(see~\cite{dieck}*{p.~98}). This will enable us below to construct equivariant 
maps with domain $N(q)$ by induction over skeleta. 
Let 
\[ S(q):=\Gamma\bs N(q).\]
Whilst $\Gamma$ acts simplicially on the simplicial complex $N(q)$,
the simplicial structure of $N(q)$ does not necessarily induce a
simplicial structure on $S(q)$. For instance, one might have a $1$-simplex
between a vertex $v$ and a vertex $\gamma v$ for $\gamma\in\Gamma$
which yields a loop in the quotient $S(q)$. But 
the simplicial structure on $N(q)$ induces the structure of a
$\Delta$-complex on $S(q)$. The number of (ordered) $n$-simplices of $S(q)$ is 
the number of $(n+1)$-configurations of sets in $\calV_\delta(q)$ such that 
their intersection is non-empty and the first set of the configuration 
lies in a $\Gamma$-transversal of $\calV_\delta(q)$, say, in 
\[\{\{\pi_q(x)\}\times T_i^{-1}K_i\mid x\in A_i^c, i\in\{1,\ldots, m\}\}\cup 
\{\{\pi_q(x)\}\times K_i\mid x\in R_i^c, i\in\{1,\ldots, m\}\}.\] 
On the other hand, $\mult^{\calV_\delta(q)}(n+1)(y)$ with $y\in\Gamma/\Gamma_q$ is the 
number of $(n+1)$-configurations of sets in $\calV_\delta(q)$ such that their intersection is 
non-empty and lies in the component $\{y\}\times\widetilde M$ and the first set of the configuration 
lies in the above $\Gamma$-transversal. The probability measure on $\Gamma/\Gamma_q$ is the normalized 
counting measure. Hence the number of $n$-simplices in $S(q)$ is bounded by 
$[\Gamma:\Gamma_q]\expected \mult^{\calV_\delta(q)}(n+1)$ and 
with~\eqref{eq: endgame expectation} the first statement follows. 

Finally, we construct equivariant maps $\tilde f:
\Gamma/\Gamma_q\times\widetilde M\to N(q)$ and 
$\tilde g: N(q)\to\Gamma/\Gamma_q\times\widetilde M$ and an
equivariant homotopy $\tilde g\circ\tilde f\simeq\id$. 
The existence of the map $\tilde g$ and the equivariant homotopy are ultimately a 
consequence of the general fact that $\widetilde M$ as a model of the classifying space 
$E\Gamma$ is a terminal object in the homotopy category of free $\Gamma$-CW complexes. 

The maps $f$ and $g$ and the homotopy $g\circ
f\simeq\id$ in the statement of the lemma will be the
induced maps on orbit spaces. This will finish the proof, since  
\[  \Gamma\bs\bigl(\Gamma/\Gamma_q\times\widetilde M\bigr) \cong
\Gamma_q\bs\widetilde M.\]
By choosing an equivariant partition of unity subordinate to 
$\calV_\delta(q)$ one obtains an equivariant map $\tilde f: \Gamma/\Gamma_q\times \widetilde M\to N(q)$, called nerve map. See~\cite{bredon}*{p.~133} for a construction of the nerve map. 

We construct the map $\tilde g$ by an induction over the skeleta of $N(q)$. To this end, 
one chooses for every set $V$ in
$\calV_\delta(q)$ a point $m_V\in V$ in an equivariant way. We define 
$\tilde g$ on the $0$-skeleton by mapping the vertex associated to
$V$ to~$m_V$. 
The $i$-skeleton $N(q)^{(i)}$ of $N(q)$, $i\ge 1$, is built from
  the $(i-1)$-skeleton by attaching equivariant $i$-cells $\Gamma\times
  D^i$ along equivariant attaching maps from $\Gamma\times S^{i-1}$ to the $(i-1)$-skeleton. 
  First let $i=1$. If two subsets of $\calV_\delta(q)$ intersect, they
    lie in the same path component of $\Gamma/\Gamma_q\times \widetilde M$. 
	Thus, if $\phi: \Gamma\times S^0\to N(q)$ is the attaching map of an equivariant $1$-cell, 
	then \[\{e\}\times S^0\xrightarrow{\phi\vert_{\{e\}\times S^0}} N(q)^{(0)}\xrightarrow{\tilde g}\Gamma/\Gamma_q\times \widetilde M\] 
	can be extended to 
	$\{e\}\times D^1$, and the latter has a unique equivariant extension to $\Gamma\times D^1$. 
    Next let $i\ge 2$. Since the 
	homotopy group  $\pi_{i-1}(\Gamma/\Gamma_q\times \widetilde M)$ 
	vanishes with respect to arbitrary base points, any composition 
	\[\{e\}\times S^{i-1}\to N(q)^{(i-1)}\xrightarrow{\tilde g}\Gamma/\Gamma_q\times\widetilde M\]
	can be extended to 
	$\{e\}\times D^i$, and, as before, the latter has a unique equivariant extension 
	to $\Gamma\times D^i$. 
	
	Similarly, 
	 the equivariant homotopy between $\tilde g\circ\tilde f$ and $\id$ is 
	 constructed by an induction over the skeleta 
	 using that $\tilde f\circ \tilde g(z)$ and $z$ lie in the same path component for every
$z\in \Gamma/\Gamma_q\times\widetilde M$ and  
	 the vanishing of homotopy groups of 
	 $\Gamma/\Gamma_q\times \widetilde M$ in degrees $\ge 1$. 
\end{proof}

\begin{proof}[End of the proof of Theorem~\ref{thm: amenable covers}]
According to Subsection~\ref{sub: reduction} we may and will assume that 
$M$ is oriented. 
Let $\epsilon>0$. By Theorem~\ref{thm: smallness of multiplicity}
there is $\delta>0$ such that \[\expected\mult^{\calV_\delta}(n+1)<\epsilon/2.\]
For this $\delta$ we take $q_0$ and $f,g$ and $S(q)$ as in the preceding lemma. For $q\ge q_0$, the number 
of $n$-simplices of $S(q)$ is at most $[\Gamma:\Gamma_q]\epsilon$. 
By the isomorphism between simplicial and singular homology for
$\Delta$-complexes~\cite{hatcher}*{Theorem~2.27 on p.~128} the
homology class 
$H_n(f)([\Gamma_q\bs\widetilde M])$ has a representative that is
an integral 
linear combination of at most $[\Gamma:\Gamma_q]\epsilon$-many singular $n$-simplices. 
Hence the fundamental class 
\[[\Gamma_q\bs\widetilde M]=H_n(g)\circ
H_n(f)([\Gamma_q\bs \widetilde M])\] 
of $\Gamma_q\bs \widetilde M$ can also 
be written as an integral  linear combination of at most $[\Gamma:\Gamma_q]\epsilon$-many singular $n$-simplices. 
Since $\epsilon>0$ was arbitrary, Theorem~\ref{thm: amenable covers} finally follows 
from Theorems~\ref{thm: estimate torsion homology from integral simplicial volume} 
and~\ref{thm: estimate rank homology from integral simplicial volume}. 
\end{proof}

\begin{bibdiv}
\begin{biblist}

\bib{abert+nikolov}{article}{
   author={Ab{\'e}rt, Mikl{\'o}s},
   author={Nikolov, Nikolay},
   title={Rank gradient, cost of groups and the rank versus Heegaard genus
   problem},
   journal={J. Eur. Math. Soc. (JEMS)},
   volume={14},
   date={2012},
   number={5},
   pages={1657--1677},
}

\bib{ballmann+gromov+schroeder}{book}{
   author={Ballmann, Werner},
   author={Gromov, Mikhael},
   author={Schroeder, Viktor},
   title={Manifolds of nonpositive curvature},
   series={Progress in Mathematics},
   volume={61},
   publisher={Birkh\"auser Boston Inc.},
   place={Boston, MA},
   date={1985},
}

\bib{bergeron+venkatesh}{article}{
   author={Bergeron, Nicolas},
   author={Venkatesh, Akshay},
   title={The asymptotic growth of torsion homology for arithmetic groups},
   journal={J. Inst. Math. Jussieu},
   volume={12},
   date={2013},
   number={2},
   pages={391--447},
}

\bib{bredon}{book}{
   author={Bredon, Glen E.},
   title={Introduction to compact transformation groups},
   note={Pure and Applied Mathematics, Vol. 46},
   publisher={Academic Press},
   place={New York},
   date={1972},
}

\bib{curtis+reiner}{book}{
   author={Curtis, Charles W.},
   author={Reiner, Irving},
   title={Methods of representation theory. Vol. I},
   note={With applications to finite groups and orders;
   Pure and Applied Mathematics;
   A Wiley-Interscience Publication},
   publisher={John Wiley \& Sons Inc.},
   place={New York},
   date={1981},
}
\bib{emery}{article}{
   author={Emery, Vincent},
   title={Torsion homology of arithmetic lattices and $K_2$ of imaginary
   fields},
   journal={Math. Z.},
   volume={277},
   date={2014},
   number={3-4},
   pages={1155--1164},
}

\bib{gelander}{article}{
   author={Gelander, Tsachik},
   title={Homotopy type and volume of locally symmetric manifolds},
   journal={Duke Math. J.},
   volume={124},
   date={2004},
   number={3},
   pages={459--515},
}

\bib{gromov-book}{book}{
   author={Gromov, Misha},
   title={Metric structures for Riemannian and non-Riemannian spaces},
   series={Modern Birkh\"auser Classics},
   note={Based on the 1981 French original;
   With appendices by M. Katz, P. Pansu and S. Semmes;
   Translated from the French by Sean Michael Bates},
   publisher={Birkh\"auser Boston Inc.},
   place={Boston, MA},
}
\bib{gromov}{article}{
   author={Gromov, M.},
   title={Volume and bounded cohomology},
   journal={Inst. Hautes \'Etudes Sci. Publ. Math.},
   number={56},
   date={1982},
   pages={5--99 (1983)},
}
\bib{gromov-large}{article}{
   author={Gromov, M.},
   title={Large Riemannian manifolds},
   conference={
      title={Curvature and topology of Riemannian manifolds},
      address={Katata},
      date={1985},
   },
   book={
      series={Lecture Notes in Math.},
      volume={1201},
      publisher={Springer},
      place={Berlin},
   },
   date={1986},
   pages={108--121},
}
		
\bib{guth}{article}{
   author={Guth, Larry},
   title={Volumes of balls in large Riemannian manifolds},
   journal={Ann. of Math. (2)},
   volume={173},
   date={2011},
   number={1},
   pages={51--76},
}

\bib{hatcher}{book}{
   author={Hatcher, Allen},
   title={Algebraic topology},
   publisher={Cambridge University Press},
   place={Cambridge},
   date={2002},
   pages={xii+544},
  }
\bib{lackenby-tau}{article}{
   author={Lackenby, Marc},
   title={Large groups, property $(\tau)$ and the homology growth of
   subgroups},
   journal={Math. Proc. Cambridge Philos. Soc.},
   volume={146},
   date={2009},
   number={3},
   pages={625--648},
}

\bib{lackenby-large}{article}{
   author={Lackenby, Marc},
   title={Detecting large groups},
   journal={J. Algebra},
   volume={324},
   date={2010},
   number={10},
   pages={2636--2657},
}


\bib{lueck-homology}{article}{
   author={L{\"u}ck, W.},
   title={Approximating $L\sp 2$-invariants and homology growth},
   journal={Geom. Funct. Anal.},
   volume={23},
   date={2013},
   number={2},
   pages={622--663},
}

\bib{lueck}{book}{
   author={L{\"u}ck, Wolfgang},
   title={$L^2$-invariants: theory and applications to geometry and
   $K$-theory},
   series={Ergebnisse der Mathematik und ihrer Grenzgebiete. 3. Folge.},
   volume={44},
   publisher={Springer-Verlag},
   place={Berlin},
   date={2002},
   pages={xvi+595},
}

\bib{ornstein+weiss}{article}{
   author={Ornstein, Donald S.},
   author={Weiss, Benjamin},
   title={Ergodic theory of amenable group actions. I. The Rohlin lemma},
   journal={Bull. Amer. Math. Soc. (N.S.)},
   volume={2},
   date={1980},
   number={1},
   pages={161--164},
}

\bib{sauer-volume}{article}{
   author={Sauer, Roman},
   title={Amenable covers, volume and $L\sp 2$-Betti numbers of aspherical
   manifolds},
   journal={J. Reine Angew. Math.},
   volume={636},
   date={2009},
   pages={47--92},
}

\bib{soule}{article}{
   author={Soul{\'e}, C.},
   title={Perfect forms and the Vandiver conjecture},
   journal={J. Reine Angew. Math.},
   volume={517},
   date={1999},
   pages={209--221},
}

\bib{dieck}{book}{
   author={tom Dieck, Tammo},
   title={Transformation groups},
   series={de Gruyter Studies in Mathematics},
   volume={8},
   publisher={Walter de Gruyter \& Co.},
   place={Berlin},
   date={1987},
   pages={x+312},
}

\bib{weiss}{article}{
   author={Weiss, Benjamin},
   title={Monotileable amenable groups},
   conference={
      title={Topology, ergodic theory, real algebraic geometry},
   },
   book={
      series={Amer. Math. Soc. Transl. Ser. 2},
      volume={202},
      publisher={Amer. Math. Soc.},
      place={Providence, RI},
   },
   date={2001},
   pages={257--262},
}

\end{biblist}
\end{bibdiv}

\end{document}